\let\@fnsymbol\@arabic
\theoremstyle{plain}
\newtheorem{theorem}{\bf Theorem}[section]
\newtheorem{example1}[theorem]{\bf Example}
\newtheorem{conjecture}[theorem]{Conjecture}
\newtheorem{corollary}[theorem]{Corollary}
\newtheorem{proposition}[theorem]{Proposition}
\newtheorem{question}[theorem]{Question}
\newtheorem{remark}[theorem]{Remark}
\theoremstyle{definition}
\newtheorem{definition}[theorem]{Definition}
\newtheorem*{theorem*}{\bf Theorem}
\newcommand{\sd}{\operatorname{sd} }
\newcommand{\cd}{\operatorname{cd} }
\newcommand{\init}{\operatorname{in} }
\newcommand{\gin}{\operatorname{gin} }
\newcommand{\N}{\mathbb{N}}
\newcommand{\Z}{\mathbb{Z}}
\newcommand{\C}{\mathcal{C}}
\newcommand{\CC}{\mathbb{C}}
\newcommand{\Min}{\operatorname{Min} }
\newcommand{\height}{\operatorname{height} }
\newcommand{\Ker}{\operatorname{Ker} }
\newcommand{\Coker}{\operatorname{Coker} }
\newcommand{\Proj}{\operatorname{Proj} }
\newcommand{\Spec}{\operatorname{Spec} }
\newcommand{\Ext}{\mathrm{Ext}}
\newcommand{\depth}{\operatorname{depth} }
\newcommand{\pp}{\mathfrak{p}}
\newcommand{\ttt}{\mathfrak{t}}
\newcommand{\mm}{\mathfrak{m}}
\newcommand{\nn}{\mathfrak{n}}
\newcommand{\Hom}{\operatorname{Hom} }
\newcommand{\homo}{\operatorname{hom} }
\newcommand{\reg}{\operatorname{reg} }
\definecolor{mypink}{RGB}{215, 5, 234}
 \newcommand{\GCD}{\mathrm{GCD}}
 \newcommand{\LCM}{\mathrm{LCM}}
  \newcommand{\GL}{\mathrm{GL}}
\newcommand{\chara}{\mathrm{char}}
\begin{document}
\title{Square-free Gr\"obner degenerations}
\author{Aldo Conca}
\email{conca@dima.unige.it}
\address{Dipartimento di Matematica, Universit\'a di Genova, Italy}
\author{Matteo Varbaro} 
\email{varbaro@dima.unige.it}
\address{Dipartimento di Matematica, Universit\'a di Genova, Italy} 
 \thanks{Both authors are supported by PRIN 2010S47ARA 003, `Geometria delle Variet\`a Algebriche.'} 
  \date{}
\maketitle

\begin{abstract}
Let $I$ be a homogeneous ideal of  $S=K[x_1,\dots, x_n]$ and let  $<$ be  a term order. We prove that if the initial ideal  $J=\init_<(I)$ is radical then the extremal Betti numbers of $S/I$ and of $S/J$ coincide. In particular,   $\depth(S/I)=\depth(S/J)$ and $\reg(S/I)=\reg(S/J)$.
\end{abstract}

\section{Introduction}
Let $S$ be the  polynomial ring $K[x_1,\dots,x_n]$ over a field $K$ equipped with its standard graded structure. Let  $M$ be a graded $S$-module. We denote by $\beta_{ij}(M)$ the $(i,j)$-th Betti number of  $M$, and by $h^{ij}(M)$ the dimension of the degree $j$ component of  its $i$-th local cohomology module $H_\mm^i(M)$ supported on the maximal ideal $\mm=(x_1,\dots,x_n)$. Furthermore  $\reg(M)$ denotes the  Castelnuovo-Mumford regularity of $M$ and $\depth(M)$ its depth. Finally recall that a non-zero Betti number $\beta_{i,i+j}(M)$ is called extremal  if $\beta_{h,h+k}(M)=0$ for every $h\geq i$, $k\geq j$ with  $(h,k)\neq (i,j)$.

Let $I$ be a homogeneous ideal of  $S$. For every term order $<$ on $S$ we may associate to $I$ a monomial ideal $\init_<(I)$ via the computation of a Gr\"obner basis.  For  notational simplicity and when there is no danger of confusion we   suppress the dependence of the monomial ideal on $<$ and denote it by  $\init(I)$. The ideal $\init(I)$ is called the initial ideal of $I$ with respect to $<$ and  $S/\init(I)$ can be realized as the special fiber of a flat family whose generic fiber is $S/I$. This process is called a Gr\"obner degeneration. 
If the ideal $I$ is in generic coordinates (see \cite[Chapter 15.9]{Ei1} for the precise definition)  the outcome of the  Gr\"obner degeneration is called generic initial ideal of $I$ with respect to $<$ and it is denoted by $\gin(I)$. 

 It is well known that the homological and cohomological invariants behave well under  Gr\"obner degenerations.  
 Indeed one has: 

$$\beta_{ij}(S/I)\leq \beta_{ij}(S/\init(I)) \quad \mbox{ and}  \quad  h^{ij}(S/I) \leq  h^{ij}(S/\init(I)) \mbox{ for all } i,j  $$
and, in particular, 
$$\reg(S/I)\leq \reg(S/\init(I)) \quad \mbox{ and }  \quad  \depth(S/I)\geq \depth(S/\init(I)).$$ 

Simple examples show that the inequalities are in general strict. 
On the other hand, Bayer and Stillman \cite{BS} proved that equality  holds  is a special and important case: 

\begin{theorem}[Bayer-Stillman]
\label{BaSt}
 Let  $<$ be the degree reverse lexicographic order. Then for every homogeneous ideal $I$ of $S$ one has: 
 $$\reg(S/I)=\reg(S/\gin(I))  \mbox{ \ and \ } \depth(S/I) = \depth(S/\gin(I)).$$
\end{theorem}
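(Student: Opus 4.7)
The upper bound $\reg(S/I) \le \reg(S/\gin(I))$ and the lower bound $\depth(S/I) \ge \depth(S/\gin(I))$ are instances of the general Gr\"obner degeneration inequalities recalled above, so only the reverse inequalities need to be proved. My strategy is to exploit two phenomena specific to the degree reverse lexicographic order combined with the genericity of coordinates underlying $\gin(I)$: the commutation of the initial-ideal operation with colon and quotient by $x_n$, and the fact that $x_n$ plays the role of a generic linear form.

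The central ingredient I would establish is the pair of commutation identities characteristic of degrevlex: for every homogeneous ideal $I \subseteq S$,
$$\init_<(I : x_n) = \init_<(I) : x_n \quad \text{and} \quad \init_<(I + (x_n)) = \init_<(I) + (x_n).$$
Both follow from the fact that $x_n$ is the smallest variable in degrevlex, so any Gr\"obner basis element whose leading term is divisible by $x_n$ can be taken, modulo elements with smaller leading term, to lie in $(x_n)$. Since $I$ and $\init_<(I)$ share the same Hilbert function, the identities above force $(I:x_n)/I$ and $(\init_<(I):x_n)/\init_<(I)$, as well as $S/(I+(x_n))$ and $S/(\init_<(I)+(x_n))$, to have matching Hilbert functions.

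For depth, I would use that $\depth(S/I)$ equals the maximal length of a regular sequence of linear forms, which after the generic change of coordinates can be realized by $x_n, x_{n-1}, \ldots$. The statement that $x_n$ is a nonzerodivisor on $S/I$ is equivalent to $I:x_n = I$, which by the commutation identity and Hilbert function comparison is equivalent to $x_n$ being a nonzerodivisor on $S/\gin(I)$. Modding out by $x_n$ reduces to the same statement in $S/(x_n) \cong K[x_1, \ldots, x_{n-1}]$ for ideals that still share the same Hilbert function, and induction on $n$ yields $\depth(S/I) = \depth(S/\gin(I))$. For regularity I would combine Galligo's theorem---so that $\gin(I)$ is Borel-fixed and its regularity is computed combinatorially as the maximum degree of a minimal generator---with the recursive identity
$$\reg(S/I) = \max\bigl(\reg(S/(I+(x_n))),\, \reg((I:x_n)/I)\bigr)$$
valid whenever $x_n$ is filter-regular on $S/I$, a condition guaranteed by the generic change of coordinates. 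Applying this recursion to both $I$ and $\gin(I)$ and invoking the commutation identities at each step propagates the equality of regularities.

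The main obstacle I foresee is establishing the commutation identity $\init_<(I:x_n) = \init_<(I):x_n$, which truly encapsulates why degrevlex is special: other term orders do not enjoy this property. A lesser subtlety is that $\init_<(I:x_n)$ is generally not itself the generic initial ideal of $I:x_n$---the ambient coordinates can cease to be generic upon saturation---but this is immaterial for the induction, which only requires depth and regularity comparisons between ideals with matching Hilbert functions, both controlled by the commutation identities above.
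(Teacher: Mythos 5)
The paper does not prove this statement: it is quoted as a classical result of Bayer and Stillman and used as a black box (the reference is [BS87]), so there is no internal proof to compare against. Your sketch is, in outline, the standard argument from the literature (Bayer--Stillman's original paper; see also Eisenbud, Ch.~15): the two degrevlex commutation identities $\init(I:x_n)=\init(I):x_n$ and $\init(I+(x_n))=\init(I)+(x_n)$ --- whose clean justification is that for a homogeneous $f$, divisibility of $\init_{\mathrm{revlex}}(f)$ by $x_n$ forces divisibility of $f$ by $x_n$ --- combined with the genericity of $x_n,x_{n-1},\dots$ as an almost regular (filter-regular) sequence and the regularity recursion. I see no gap in the plan, only two points worth tightening. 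First, in the depth induction you should be explicit about how the argument terminates: once the commutation identities show that the longest initial segment of $x_n,x_{n-1},\dots$ forming a regular sequence is the same for $S/I$ and for $S/\gin(I)$, and genericity shows that on the $I$-side this length is exactly $\depth(S/I)$, the general inequality $\depth(S/I)\geq\depth(S/\gin(I))$ closes the loop; in particular you never need to know that $x_n$ detects positive depth for the monomial ideal $\gin(I)$ (which would require its Borel-fixedness). Second, invoking Galligo plus the combinatorial formula ``$\reg$ of a Borel-fixed ideal equals the maximal degree of a minimal generator'' is both redundant --- your filter-regular recursion, propagated through the commutation identities and the equality of Hilbert functions of $(I:x_n)/I$ and $(\gin(I):x_n)/\gin(I)$, already yields $\reg(S/I)=\reg(S/\gin(I))$ --- and characteristic-sensitive, since that formula requires strongly stable rather than merely Borel-fixed ideals and so is only automatic in characteristic $0$, whereas the recursion argument works over any field.
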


Furthermore Bayer, Charalambus and Popescu \cite{BCP} generalized  Theorem \ref{BaSt}  proving that $S/I$ and $S/\gin(I)$ have the same extremal Betti numbers (positions and values). 
\medskip  
 
Algebras with straightening laws  (ASL for short)  were introduced by  De Concini,
Eisenbud and Procesi  in \cite{DEP, Ei} and, more or less simultaneously and in a slightly different way,  by Baclawski \cite{Ba}. Actually they appear under the name of ``ordinal
Hodge algebras'' in \cite{DEP}  while the terminology ``ASL''  is used  by Eisenbud  in his survey  \cite{Ei} and later on by Bruns and Vetter in \cite{BV}. 
This notion arose  as an axiomatization of  the underlying combinatorial structure observed by 
many authors  in  classical algebras appearing in  invariant theory, commutative algebra  and algebraic geometry. For example,   coordinate rings of flag varieties,  their  Schubert subvarieties  and various kinds of rings defined by determinantal equations   are ASL.   Roughly speaking an ASL  is an algebra $A$ whose generators and relations are governed by  a finite poset $H$ in a  special way. Any ASL $A$   has a discrete counterpart   $A_D$ defined  by square-free monomials of degree $2$.  Indeed it turns out that  $A_D$ can be realized as Gr\"obner degeneration of $A$ and ASL's can be characterized via Gr\"obner degenerations  \cite[Lemma 5.5]{Co}.

In \cite{DEP}  it is proved, in the  general setting of Hodge algebras,  that  $A$ is Gorenstein or Cohen-Macaulay if $A_D$ is  Gorenstein or Cohen-Macaulay and the authors mention that: ``The converse is false: It is easy for $A$ to be Gorenstein without $A_D$ being so, and presumably the same could happen for Cohen-Macaulayness''.  Within the general  framework of Hodge algebras  discussed in  \cite{DEP},  a Cohen-Macaulay algebra  with a non-Cohen-Macaulay associated discrete  algebra   has been provided by Hibi in \cite[Example pg. 285]{Hi}. In contrast, a Cohen-Macaulay ASL with non-Cohen-Macaulay discrete counterpart could not be found. In this paper we show that such an example does not exist because $\depth A=\depth A_D$ for any ASL $A$ (Corollary \ref{ASLCM}).   

The question whether some homological invariants of an ASL depend only on its discrete counterpart leads  quickly to consider possible generalizations.  Herzog was guided  by these considerations to conjecture the following:   

\begin{conjecture}[Herzog]
\label{Herzogcon}
Let $I$ be a homogeneous ideal of a standard graded polynomial ring $S$, and $<$ a term order on $S$. If $\init(I)$ is square-free, then the extremal Betti numbers of $S/I$ and those of $S/\init(I)$ coincide (positions and values).
\end{conjecture}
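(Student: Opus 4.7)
The plan is to reduce Herzog's conjecture to the stronger statement that the graded local cohomology Hilbert functions of $S/I$ and $S/\init_<(I)$ coincide in every bidegree:
\[
h^{ij}(S/I) \;=\; h^{ij}(S/\init_<(I)) \quad \mbox{for all } i,j.
\]
From such an equality Herzog's conjecture follows by the standard bridge between extremal Betti numbers and local cohomology furnished by local duality: extremal Betti numbers of a graded $S$-module $M$ are in bijection with the \emph{corner} entries of the table $(h^{ij}(M))_{i,j}$, and the corresponding numerical values coincide. Thus identical local cohomology tables force identical extremal Betti numbers, both positions and values.

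The next step is to realize the desired equality as flatness in the Gr\"obner flat family. Choose a weight $w$ with $\init_w(I) = \init_<(I) =: J$, and form $A := S[t]/\tilde I$, flat over $K[t]$, with fibers $A/(t-1)A \cong S/I$ and $A/tA \cong S/J$. The graded local cohomology $H^i_{\mm A}(A)$ is a $K[t]$-module, and the classical semicontinuity inequality $h^{ij}(S/I) \le h^{ij}(S/J)$ recalled in the excerpt is simply the assertion that $H^i_{\mm A}(A)$ may carry $t$-torsion. Equality in every bidegree is equivalent to the vanishing of this $t$-torsion, i.e.\ to $K[t]$-flatness of each $H^i_{\mm A}(A)$. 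The whole conjecture thus reduces to the \emph{main lemma}: if $J$ is square-free, then $H^i_{\mm A}(A)$ is $K[t]$-flat for every $i$.

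This main lemma is the hard step, and it is where the squarefree hypothesis on $J$ must finally enter in an essential way (semicontinuity alone can never upgrade an inequality to an equality). The natural tool is Frobenius splitting: after reducing to characteristic $p>0$ by generic flatness, the Stanley--Reisner ring $S/J$ is $F$-pure, and the plan is to promote this $F$-purity to a Frobenius splitting of the whole total family $A$ that is compatible with the ideal $(t)$. A compatible splitting would force $t$ to act injectively on every $H^i_{\mm A}(A)$, yielding the required $t$-torsion-freeness. The main obstacle I anticipate is precisely this propagation: showing that squarefree-ness of the initial ideal suffices to produce a compatible $F$-splitting on the flat family $A$, not merely on its special fiber. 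Once this lifting is accomplished, standard arguments --- semicontinuity in the family, then descent from positive characteristic back to arbitrary characteristic via reduction mod $p$ --- should close the proof.
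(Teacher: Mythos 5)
Your reduction to the bidegree-wise equality $h^{ij}(S/I)=h^{ij}(S/\init(I))$ and its reformulation as torsion-freeness over the base of the Gr\"obner family are exactly the paper's Theorem \ref{the1} and its set-up (the paper works dually, with the finitely generated modules $\Ext^i_P(A,P)$ over $R=K[t]_{(t)}$ rather than with the non-finitely-generated local cohomology of the total space, and then applies graded local duality; that is a technical convenience, not a real difference). The gap is in your main lemma. A Frobenius splitting of the total family $A$ compatible with $(t)$ would localize to a splitting of $A_t\cong (S/I)\otimes_K K(t)$ and hence, by descent of purity along the faithfully flat map $S/I\to (S/I)\otimes_K K(t)$, would force $S/I$ to be $F$-pure. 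But Example \ref{ex:f-pure} (due to Singh) gives a homogeneous $I$ with square-free lex initial ideal $(x_1x_3,x_1x_2,x_2x_3)$ and $S/I$ not $F$-pure, and the negative answer to Question \ref{qFpure} (Ohtani's binomial edge ideal of the $5$-cycle in characteristic $2$) exhibits the same failure for degrevlex. So the obstacle you flag is not merely difficult: the compatible splitting you want does not exist in general, and an argument that $F$-splits the whole family can only work for special classes such as the Knutson ideals of Section \ref{s:knu}. A second, independent problem is your descent step: an equality of $h^{ij}$ established after reduction mod $p$ does not return to characteristic $0$, because the graded local cohomology of \emph{both} fibres can jump under reduction --- already $h^{ij}(S/J)$ for a square-free monomial ideal depends on $\chara(K)$ by Hochster's formula.

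The paper's actual mechanism replaces $F$-purity by the strictly weaker, characteristic-free property of being cohomologically full (Definition \ref{cofull}): square-free monomial quotients are cohomologically full in every characteristic (via $F$-purity in characteristic $p$ and the Du Bois property in characteristic $0$, then \cite{MSS}), and cohomological fullness of the special fibre \emph{alone} already yields the surjectivity of $H^i_{\nn}(A)\to H^i_{\nn}(A/tA)$ (Proposition \ref{p:art}) and, through a Koll\'ar--Kov\'acs type deformation argument, the $R$-freeness of every $\Ext^i_P(A,P)$ (Proposition \ref{p:crucial}). That freeness is precisely the torsion-freeness you were aiming for, obtained without splitting Frobenius on the total space and without leaving the given characteristic; if you want to salvage your outline, the statement to target is this surjectivity on local cohomology rather than a compatible $F$-splitting.
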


In other words, Herzog's intuition was  that a square-free initial ideal behaves, with respect to the homological invariants,  as the reverse lexicographic  generic initial ideal. However the stronger statement asserting that $\gin(I)=\gin(\init(I))$ if $\init(I)$ is square-free turned out to be false, see Example \ref{ingin}. 
The above conjecture, in various forms, has been discussed in several occasions by Herzog and his collaborators.  It  appeared in print only recently in \cite[Conjecture 1.7]{CDG4} and in the introduction of \cite{HR18}.  In this paper we solve positively Herzog's conjecture. Indeed we establish a stronger result: 

\begin{theorem}
\label{the1}
Let $I$ be a homogeneous ideal of a standard graded polynomial ring $S$ such that $\init(I)$ is a square-free monomial ideal for some term order. Then
\[h^{ij}(S/I)=h^{ij}(S/\init(I)) \ \ \ \forall \ i, j. \]
\end{theorem}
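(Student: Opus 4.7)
The inequality $h^{ij}(S/I) \le h^{ij}(S/\init(I))$ is already recorded in the introduction as a general feature of Gr\"obner degenerations, so the task is to prove the reverse inequality under the hypothesis that $J := \init(I)$ is square-free. My first move is to work with the explicit one-parameter flat family realizing the degeneration: choose a weight $w$ witnessing $<$, form the $w$-homogenization $\tilde I \subset R := S[t]$, and view $R/\tilde I$ as a graded, $K[t]$-flat family with $S/I$ as the fiber at $t=1$ and $S/J$ as the fiber at $t=0$. Writing $\mathfrak{M} = (x_1, \dots, x_n)R$, the modules $H^i_{\mathfrak{M}}(R/\tilde I)$ are graded $R$-modules whose fibers at $t=0$ and $t=1$ compute $H^i_{\mathfrak{m}}(S/J)$ and $H^i_{\mathfrak{m}}(S/I)$ respectively, and by local duality it is equivalent to work with the finitely generated graded $R$-modules $\Ext^{n-i}_R(R/\tilde I, R)$.

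With this setup, the theorem becomes the statement that every graded component of $H^i_{\mathfrak{M}}(R/\tilde I)$ is $K[t]$-flat, equivalently that multiplication by $t-a$ is injective on these components for every $a \in K$. This is where the square-free hypothesis on $J$ must genuinely enter. My plan is to exploit the strong combinatorial structure of $S/J$ --- Hochster's formula for local cohomology of a Stanley--Reisner ring, together with Reisner's criterion --- to construct, by induction on the number of generators of $J$ or via a Mayer--Vietoris spectral sequence arising from the decomposition of $\D(J)$ into stars and links, a filtration of $R/\tilde I$ whose associated graded pieces have $K[t]$-flat local cohomology of the required rank in each degree. Equality of Hilbert functions across the family then forces the reverse inequality at $t=1$.

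The principal obstacle I anticipate is precisely this lifting step: transferring the rigid combinatorial structure enjoyed by $S/J$ to the total family $R/\tilde I$ in a way that controls $\Ext$, or equivalently local cohomology, in each graded degree. Square-freeness is indispensable here --- for non-radical initial ideals both inequalities can be strict --- so the argument must at some point use that $J$ is the Stanley--Reisner ideal of a simplicial complex, not merely a monomial ideal. Should the direct Mayer--Vietoris approach stall, I would fall back on a reduction to positive characteristic, exploiting that $S/J$ is $F$-pure and attempting to lift a Frobenius splitting along $R/\tilde I$; such a lift would directly force injectivity of multiplication by $t$ on $H^i_{\mathfrak{M}}(R/\tilde I)$ in each degree, which as noted is exactly what is needed.
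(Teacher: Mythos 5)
Your setup is the right one and matches the paper's: the $w$-homogenization gives a flat family $A=P/\homo_w(I)$ over $R=K[t]_{(t)}$ with special fiber $S/J$ and generic fiber $(S/I)\otimes_K K(t)$, and the theorem reduces, via graded local duality and base change, to showing that each graded component $\Ext^{n-i}_P(A,P)_j$ is a free $R$-module; this is precisely the content of the paper's Proposition \ref{p:crucial}. The gap lies in both of the mechanisms you propose for this key step. The Mayer--Vietoris/filtration idea does not get off the ground: the decomposition of the Stanley--Reisner complex of $J$ into stars and links corresponds to the operations $J\mapsto J+(x_i)$ and $J\mapsto J:x_i$ on the special fiber, and these do not lift to a filtration of $A$, because forming initial ideals does not commute with sums and colon ideals of $I$. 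There is no combinatorial structure on the total family to induct on, and Reisner's criterion (a Cohen--Macaulayness criterion) plays no role since no Cohen--Macaulayness is assumed.

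The Frobenius fallback also fails as stated. A Frobenius splitting of $S/J$ need not lift along the family: the paper's Example \ref{ex:f-pure} (due to Singh) exhibits an ideal $I$ with square-free lex initial ideal such that $S/I$ is not $F$-pure, so no splitting compatible with the whole of $A$ can exist in general; and in characteristic $0$ there is no Frobenius to reduce to without losing control of the local cohomology ranks under reduction mod $p$. Note also that no splitting or surjectivity statement can be applied directly to $A\to A/tA$, since $t$ is a nonzerodivisor rather than nilpotent. What actually works, and is the paper's key idea, is a weaker consequence of $F$-purity (characteristic $p$) and of the Du Bois property (characteristic $0$): $S/J$ is \emph{cohomologically full} (Proposition \ref{p:lyu}, via \cite{MSS}), meaning every surjection onto it from a ring with the same reduction is surjective on local cohomology. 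This applies to the Artinian truncations $A/t^mA\to A/tA$, where $t$ \emph{is} nilpotent, and feeds into the Koll\'ar--Kov\'acs argument on flatness of the cohomology of the relative dualizing complex (Propositions \ref{p:art} and \ref{p:crucial}) to yield the freeness of $\Ext^i_P(A,P)$ over $R$. Without this ingredient, or an equivalent substitute, your outline does not close.
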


Since the extremal Betti numbers of $S/I$ can be characterized in terms of  the values of  $h^{ij}(S/I)$, Herzog's conjecture  follows from Theorem \ref{the1}.  We note that Theorem \ref{the1} holds  for more general gradings, see  Remark \ref{r:grad}.  

It turns out that, in many respects,  the relationship between $I$ and a square-free initial ideal $\init(I)$ (when it exists) is tighter than the relation between $I$ and its degree reverse lexicographic generic initial ideal $\gin(I)$. 
For example Herzog and Sbarra proved in \cite{HS} that  if $K$ has characteristic $0$ then  the assertion of Theorem \ref{the1} with $\init(I)$ replaced by  $\gin(I)$ holds if and only if $S/I$ is sequentially Cohen-Macaulay. Furthermore, a consequence of Theorem \ref{the1} is that, if  $\init(I)$ is a square-free monomial ideal, then $S/I$ satisfies Serre's condition $(S_r)$ if and only if $S/\init(I)$ does (see Corollary \ref{c:2}); the latter statement is false for generic initial ideals. As a further remark note that  Conjectures 1.13 and 1.14 in \cite{CDG4} are indeed special cases of  Theorem \ref{the1}. 

\bigskip

The paper is structured as follows. Section \ref{s:main} is devoted to prove Theorem \ref{the1}, and to draw some immediate consequences. In Section \ref{s:sqg} we discuss properties of  ideals admitting a monomial square-free initial ideal and some consequences of Theorem \ref{the1} on them.  We discuss as well three families of ideals  with square-free initial ideals that appeared in the literature:  ideals defining ASL's (\ref{s:ASL}), Cartwright-Sturmfels ideals (\ref{s:cs}) and Knutson ideals (\ref{s:knu}).  Finally, in Section \ref{s:q}, we discuss some open questions.

\bigskip

{\it Acknowledgements}. The authors wish to express their gratitude to Linquan Ma for enlightening discussions concerning the results in \cite{KK}, as well as for sharing the preprint \cite{DDM}. Several examples appearing in the paper have been discovered by means of computations performed with the computer algebra systems \cite{cocoa,m2}.

\section{The main result}\label{s:main}

The goal of the section is to prove Theorem \ref{the1}. We use the notation of the introduction. In the examples appearing in this paper, the term orders will always refine the order of the variables $x_1>\ldots >x_n$. Furthermore, we will write ``lex'' for the lexicographic term order, and ``degrevlex'' for the degree reverse lexicographic term order.

The main ingredient of the proof is Proposition \ref{p:crucial} that  
can be regarded as a characteristic free version of   \cite[Theorem 1.1]{KK}. 
One key ingredient in the proof of Proposition \ref{p:crucial} is the notion of cohomologically full singularities that  was  introduced and studied in the recent preprint  \cite{DDM}. Let us recall the definition: 

\begin{definition}
\label{cofull}
A Noetherian local ring $(A,\nn)$ is cohomologically full if it satisfies the following condition. 
For every local ring $(B,\mm)$  such that $\chara(B)=\chara(A)$ and $\chara(B/\mm)=\chara(A/\nn)$ 
 and every  surjection of local rings $\phi:(B,\mm)\rightarrow (A,\nn)$ such that the induced map $\overline{\phi}:B/\sqrt{(0)}\rightarrow A/\sqrt{(0)}$  is an isomorphism, one has that the induced map on local cohomology $H^i_{\mm}(B)\rightarrow H^i_{\mm}(A)=H^i_{\nn}(A)$ is surjective for all $i\in\N$.
\end{definition}

We point out that the  closely related notion of  liftable local cohomology  is introduced and  discussed  by Koll\'ar and Kov\'acs in the paper \cite{KKb} which is a  generalized and expanded version of  \cite{KK}.   The next proposition is inspired by  \cite[Proposition 5.1]{KK}.

\begin{proposition}\label{p:art}
Let $(R,\mathfrak{t})$ be an Artinian local ring, and $(A,\nn)$ be a Noetherian local flat $R$-algebra such that the special fiber $A/\mathfrak{t}A$ is cohomologically full. Let $N$ be a finitely generated $R$-module, and $N=N_0\supseteq N_1\supseteq\ldots \supseteq N_q\supseteq N_{q+1}=0$ a filtration of submodules such that $N_j/N_{j+1}\cong R/\ttt$ for all $j=0\ldots ,q$. Then, for all $i\in\N$ and $j=0\ldots ,q$, the following complex of $A$-modules is exact:
\[0\rightarrow H_{\nn}^i(N_{j+1}\otimes_{R}A)\rightarrow H_{\nn}^i(N_{j}\otimes_{R}A)\rightarrow H_{\nn}^i((N_j/N_{j+1})\otimes_{R}A)\rightarrow 0\]
\end{proposition}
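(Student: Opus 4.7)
My plan is to establish the short exact sequence for each $j$ separately by proving that the map $H^i_\nn(M_j) \to H^i_\nn(A/\mathfrak{t}A)$ is surjective for every $i \in \N$, where I write $M_j := N_j \otimes_R A$. Since $A$ is flat over $R$, tensoring the exact sequence $0 \to N_{j+1} \to N_j \to N_j/N_{j+1} \to 0$ gives a short exact sequence $0 \to M_{j+1} \to M_j \to A/\mathfrak{t}A \to 0$ of $A$-modules (using $N_j/N_{j+1} \cong R/\mathfrak{t}$). The corresponding long exact sequence of local cohomology collapses to the asserted three-term exact complex, for every $i$, precisely when all boundary maps $H^i_\nn(A/\mathfrak{t}A) \to H^{i+1}_\nn(M_{j+1})$ vanish, which is equivalent to $H^i_\nn(M_j) \to H^i_\nn(A/\mathfrak{t}A)$ being surjective for every $i$.

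To obtain this surjectivity, I would factor the canonical quotient $A \twoheadrightarrow A/\mathfrak{t}A$ through $M_j$. Choose an element $n \in N_j$ whose image in $N_j/N_{j+1} \cong R/\mathfrak{t}$ is $1$, and define the $A$-linear map $\alpha: A \to M_j$ by $\alpha(a) = n \otimes a$. By construction the composition $A \xrightarrow{\alpha} M_j \twoheadrightarrow M_j/M_{j+1} \cong A/\mathfrak{t}A$ is exactly the canonical quotient $A \to A/\mathfrak{t}A$. Applying $H^i_\nn(-)$ yields a factorization $H^i_\nn(A) \to H^i_\nn(M_j) \to H^i_\nn(A/\mathfrak{t}A)$ of the map induced by $A \twoheadrightarrow A/\mathfrak{t}A$; hence if the composite is surjective, then so is $H^i_\nn(M_j) \to H^i_\nn(A/\mathfrak{t}A)$.

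Surjectivity of the composite then follows from the cohomological fullness of $A/\mathfrak{t}A$ applied to the local ring surjection $A \twoheadrightarrow A/\mathfrak{t}A$. The main hypothesis requiring verification is the induced isomorphism on reduced rings: since $R$ is Artinian, $\mathfrak{t}$ is nilpotent, so $\mathfrak{t}A \subseteq \sqrt{(0)_A}$ in $A$; a short computation using this nilpotence together with the flatness of $A$ over $R$ yields $\sqrt{(0)_{A/\mathfrak{t}A}} = \sqrt{(0)_A}/\mathfrak{t}A$, so the induced map on reduced rings is the identity on $A/\sqrt{(0)_A}$. The characteristic hypotheses are automatic in the settings of interest. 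The main technical subtlety in carrying out this plan is precisely this verification of the hypotheses of cohomological fullness; once it is in place, the structural factorization argument above completes the proof.
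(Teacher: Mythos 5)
Your argument is correct and follows essentially the same route as the paper's proof: both factor the canonical surjection $A\twoheadrightarrow A/\ttt A$ through $N_j\otimes_R A$ via multiplication by an element lifting $1$, invoke cohomological fullness of the special fiber to get surjectivity of $H^i_{\nn}(N_j\otimes_R A)\to H^i_{\nn}(A/\ttt A)$, and then split the long exact sequence of local cohomology attached to $0\to N_{j+1}\otimes_R A\to N_j\otimes_R A\to (N_j/N_{j+1})\otimes_R A\to 0$. Your verification of the reduced-ring hypothesis (via nilpotence of $\ttt$ in the Artinian ring $R$) is exactly the point the paper asserts without comment, and flatness is not actually needed for that step.
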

\begin{proof}
Notice that the surjection $A\xrightarrow{\phi} A/\ttt A$ yields an isomorphism between $A/\sqrt{(0)}$ and $ (A/\ttt A)/\sqrt{(0)}$. Since the tensor product is right-exact, we have a surjection of $A$-modules 
\[N_{j}\otimes_{R}A\xrightarrow{\beta} (N_j/N_{j+1})\otimes_{R}A\cong A/\ttt A.\]
Denoting by $\beta'$ the composition of $\beta$ with the isomorphism $(N_j/N_{j+1})\otimes_{R}A\cong A/\ttt A$, choose $x\in N_{j}\otimes_{R}A$ such that $\beta'(x)=1$, and set $\alpha:A\rightarrow N_{j}\otimes_{R}A$ the multiplication by $x$. Then $\beta'\circ \alpha:A\rightarrow A/\ttt A$ equals $\phi$. Therefore, being $A/\ttt A$ cohomologically full, the induced map of $A$-modules
\[H^k_{\nn}(\beta'\circ \alpha)=H^k_{\nn}(\beta')\circ H^k_{\nn}(\alpha):H^k_{\mathfrak{n}}(A)\rightarrow H^k_{\mathfrak{n}}(A/\ttt A)\]
is surjective for all $k\in\N$, so that $H^k_{\nn}(\beta):H^k_{\nn}(N_{j}\otimes_{R}A)\rightarrow H^k_{\nn}((N_j/N_{j+1})\otimes_{R}A)$ is surjective as well. Since $A$ is a flat $R$-algebra, for each $j=0,\ldots ,q$ we have a short exact sequence of $A$-modules
\[0\rightarrow N_{j+1}\otimes_{R}A\rightarrow N_{j}\otimes_{R}A\xrightarrow{\beta} (N_j/N_{j+1})\otimes_{R}A\rightarrow 0.\]
Passing to the long exact sequence on local cohomology
\begin{eqnarray*}
\ldots \to H^{i-1}_{\nn}(N_{j}\otimes_{R}A)\xrightarrow{H^{i-1}_{\nn}(\beta)} H^{i-1}_{\nn}((N_j/N_{j+1})\otimes_{R}A)\rightarrow \\
H^i_{\nn}(N_{j+1}\otimes_{R}A)\rightarrow H^i_{\nn}(N_{j}\otimes_{R}A)\xrightarrow{H^i_{\nn}(\beta)} H^i_{\nn}((N_j/N_{j+1})\otimes_{R}A)\to \ldots ,
\end{eqnarray*}
being each $H^k_{\nn}(\beta)$ surjective, we get the thesis.
\end{proof}

The following is, essentially, already contained in \cite{Lyu}.

\begin{proposition}\label{p:lyu}
Let  $J\subseteq S=K[x_1,\dots,x_n]$ be a square-free monomial ideal. Then $(S/J)_{\mm}$ is cohomologically full.
\end{proposition}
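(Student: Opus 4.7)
The plan is to verify Definition \ref{cofull} for $A = (S/J)_\mm$ by reducing to Lyubeznik's theorem in \cite{Lyu}, which controls the local cohomology of a quotient $T/L$ of a polynomial or power series ring $T$ when $\sqrt{L}$ is a square-free monomial ideal.

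Given a surjection $\phi : (B, \mm_B) \twoheadrightarrow (A, \nn) = (S/J)_\mm$ satisfying the hypotheses of Definition \ref{cofull}, I would first observe that $A$ is reduced since $J$ is radical, so $\sqrt{(0)}_A = 0$; the hypothesis that $\overline{\phi}$ is an isomorphism then forces $\ker \phi = \sqrt{(0)}_B$, which is therefore nilpotent. Next I would pass to the completion $\widehat{\phi}:\widehat{B}\to\widehat{A}$: this preserves the surjectivity in question by faithful flatness and flat base change for local cohomology, and it preserves the hypotheses of Definition \ref{cofull} (nilradicals commute with completion, and the characteristics are unchanged). The equicharacteristic hypothesis of Definition \ref{cofull} now lets me invoke Cohen's structure theorem to present $\widehat{B}=T/L$ for a power series ring $T$ over the common residue field $K$, and by the previous paragraph $\widehat{A}=T/\sqrt{L}$.

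With this setup I would choose the presentation of $T$ judiciously: lift the coordinates $x_1,\dots,x_n$ of $\widehat{A}$ (coming from the squarefree presentation of $A$) to elements of $T$ which are part of a regular system of parameters, and complete with auxiliary variables $z_1,\dots,z_s$. Under this choice, the ideal $\sqrt{L}\subseteq T=K[[x_1,\dots,x_n,z_1,\dots,z_s]]$ becomes $J\cdot T + (z_1,\dots,z_s)$, which is a square-free monomial ideal in $T$. Lyubeznik's theorem in \cite{Lyu} then gives directly the surjection $H^i_\mm(T/L) \twoheadrightarrow H^i_\mm(T/\sqrt{L})$ for all $i$, which translates back to the required surjection $H^i_{\mm_B}(B)\twoheadrightarrow H^i_\nn(A)$.

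The main obstacle, as I see it, is precisely the alignment step: ensuring that after completion and Cohen presentation, the radical of the defining ideal of $\widehat{B}$ is literally a square-free monomial ideal in the new variables, so that Lyubeznik's hypothesis applies verbatim. The equicharacteristic assumption makes Cohen's structure theorem available, and the square-freeness of $J$ is exactly the combinatorial structure Lyubeznik's machinery exploits; the remaining work is a standard regular-system-of-parameters argument. An alternative route, which avoids completing, would be to filter $B$ by powers $I\supseteq I^2\supseteq\cdots$ of $I=\ker\phi$ and use the short exact sequences
\[0 \to I^k/I^{k+1} \to B/I^{k+1} \to B/I^k \to 0,\]
whose middle quotients $I^k/I^{k+1}$ are finitely generated $A$-modules, to reduce by induction to the vanishing of the connecting maps $H^i_{\mm_B}(B/I^k)\to H^{i+1}_{\mm_B}(I^k/I^{k+1})$; the same input from \cite{Lyu} drives this variant.
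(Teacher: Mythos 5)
Your reduction to the complete equicharacteristic case is reasonable (the identification $\ker\phi=\sqrt{(0)}_B$, the passage to completions, and the Cohen presentation are all fine in outline), but the decisive step fails as written. The statement you attribute to \cite{Lyu} --- that for an ideal $L\subseteq T=K[[x_1,\dots,x_n,z_1,\dots,z_s]]$ whose radical is a square-free monomial ideal the natural map $H^i_{\mm}(T/L)\to H^i_{\mm}(T/\sqrt{L})$ is surjective --- is not a theorem of that paper; after your reductions it \emph{is} the proposition to be proved (cohomological fullness of $T/\sqrt{L}$ tested against the surjection $T/L\to T/\sqrt{L}$), so invoking it as known input makes the argument circular. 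What \cite{Lyu} provides is a computation of $H^i_{\mathfrak{a}}(R)$ for $\mathfrak{a}$ generated by monomials in a regular sequence, in particular the equality $\cd(S,J)=n-\depth(S/J)$ for square-free $J$; it says nothing about comparing $H^i_{\mm}$ of $T/L$ and $T/\sqrt{L}$. To salvage your route you would need to add the genuine content: by Matlis/local duality over $T$ the desired surjection is equivalent to injectivity of $\Ext^{d-i}_T(T/\sqrt{L},T)\to\Ext^{d-i}_T(T/L,T)$, and this follows from the injectivity of $\Ext^{j}_T(T/J,T)\to H^j_J(T)$ for square-free monomial $J$ (a theorem of Musta\c{t}\u{a}, equivalently Yanagawa's straight-module structure \cite{Ya}), since that map factors through $\Ext^j_T(T/L,T)$ because $\sqrt{L}^{\,k}\subseteq L\subseteq\sqrt{L}$. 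None of this appears in your write-up, and it is the mathematical heart of the matter. The same objection applies to your alternative filtration argument: the vanishing of the connecting maps $H^i_{\mm_B}(B/I^k)\to H^{i+1}_{\mm_B}(I^k/I^{k+1})$ is again essentially the statement to be proved and is not ``driven by the same input.''

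A secondary issue is the alignment step itself. To arrange that the kernel of $T\to\widehat{A}$ is literally $J\cdot T+(z_1,\dots,z_s)$ you must ensure that the coefficient field of $T$ maps onto the \emph{standard} copy of $K$ inside $\widehat{A}=K[[x_1,\dots,x_n]]/J$; coefficient fields of complete local rings are not unique, so this requires lifting the canonical $K\subseteq\widehat{A}$ along the nilpotent surjection $\widehat{B}\to\widehat{A}$ (possible because $K$ is separable over its prime field, hence formally smooth), and only then extending the lifts of $x_1,\dots,x_n$ to a regular system of parameters. This can be done, but it is more than ``a standard regular-system-of-parameters argument.'' For comparison, the paper sidesteps all of this: it observes that $(S/J)_{\mm}$ is $F$-pure in positive characteristic by \cite[Proposition 5.38]{HR}, hence of $F$-pure type and therefore Du Bois in characteristic zero by \cite{schwede}, and then quotes \cite[Lemma 3.3, Remark 3.4]{MSS}, where the surjectivity you are trying to establish is proved for these classes of singularities.
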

\begin{proof}
If $K$ has positive characteristic, then $(S/J)_{\mm}$ is $F$-pure by \cite[Proposition 5.38]{HR}. Hence, in characteristic zero $(S/J)_{\mm}$ is of $F$-pure type (and therefore of $F$-injective type). 
Then $(S/J)_{\mm}$ is Du Bois by \cite[Theorem~6.1]{schwede}.
So in each case, we conclude by \cite[Lemma 3.3, Remark 3.4]{MSS}.
\end{proof}

In the proposition below, $(R,\ttt)$ is a homomorphic image of a Gorenstein local ring, $P=R[x_1,\ldots ,x_n]$ is a standard graded polynomial ring over $R$ and $A$ is a graded quotient of $P$. Denote by $\nn$ the unique homogeneous maximal ideal $\ttt P+(x_1,\ldots ,x_n)$ of $P$. 

\begin{proposition}\label{p:crucial}
With the notation above, assume furthermore that $A$ is a flat $R$-algebra. If $(A/\ttt A)_{\nn}$ is cohomologically full, then $\Ext^i_{P}(A,P)$ is a free $R$-module for all $i\in \Z$. 
\end{proposition}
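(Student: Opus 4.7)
The plan is to prove $R$-freeness of $\Ext^i_P(A,P)$ graded piece by graded piece, via descending induction on $i$, by combining the universal coefficient theorem, Proposition~\ref{p:art}, and graded local duality. Since $A$ is $R$-flat and $P$ is Noetherian, $A$ admits a finite graded free resolution $F_\bullet \to A$ over $P$ (base-changing to $R/\ttt$ produces a minimal resolution of $A/\ttt A$ over $\kk[x_1,\ldots,x_n]$, whose length is at most $n$ by Hilbert's syzygy theorem), so in particular $\Ext^i_P(A,P) = 0$ for $i>n$. Because each $F_j$ is $R$-flat (being $P$-free, and $P$ is $R$-free), the universal coefficient theorem gives, for each $i$, a short exact sequence
\[0 \to \Ext^i_P(A,P)\otimes_R R/\ttt \to \Ext^i_{P/\ttt P}(A/\ttt A, P/\ttt P) \to \Tor^R_1\!\bigl(\Ext^{i+1}_P(A,P),R/\ttt\bigr)\to 0. \qquad (\ast)\]

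By the local criterion of flatness, applied to each (finitely generated) graded piece, it suffices to prove freeness after base change to $R/\ttt^k$ for every $k$, and the hypothesis that $(A/\ttt A)_\nn$ is cohomologically full is preserved under this base change (the special fiber is unchanged). So we may assume $R$ is Artinian. Then choose a composition series $R = R_0 \supsetneq \cdots \supsetneq R_{q+1} = 0$ with $R_j/R_{j+1}\cong\kk$ and $q+1 = \ell_R(R)$. Iterating Proposition~\ref{p:art} along this filtration yields the length identity
\[\ell_R\!\bigl(H^i_\nn(A)_d\bigr) = \ell_R(R)\cdot \dim_\kk H^i_\nn(A/\ttt A)_d\]
in every graded degree $d$.

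To convert this into a statement about Ext, write $R = G/J$ with $G$ Artinian Gorenstein and set $P' := G[x_1,\ldots,x_n]$, a Gorenstein graded ring of dimension $n$ with homogeneous maximal ideal $\nn'$; viewing $A$ as a $P'$-module gives $H^i_{\nn'}(A) = H^i_\nn(A)$. Graded local duality over $P'$ identifies $\Ext^{n-i}_{P'}(A,P')$ with the graded Matlis dual (over $R$) of $H^i_\nn(A)$, while the analogous duality over $P/\ttt P = \kk[x_1,\ldots,x_n]$ identifies $\Ext^{n-i}_{P/\ttt P}(A/\ttt A,P/\ttt P)$ with the $\kk$-dual of $H^i_\nn(A/\ttt A)$. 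Matlis duality preserves lengths, and a change-of-rings argument between $P'$ and $P=P'/JP'$ (using $R$-flatness of $A$) transfers the length identity above into one for $\Ext^i_P(A,P)_d$ versus $\Ext^i_{P/\ttt P}(A/\ttt A, P/\ttt P)_d$. Finally, descending induction on $i$ closes the argument: the inductive hypothesis makes $\Tor^R_1(\Ext^{i+1}_P(A,P),\kk)$ vanish in $(\ast)$, so the minimal number of $R$-generators of $\Ext^i_P(A,P)_d$ equals $\dim_\kk\Ext^i_{P/\ttt P}(A/\ttt A, P/\ttt P)_d$; combined with the length identity, this forces $\Ext^i_P(A,P)_d$ to be $R$-free over the Artinian local ring $R$.

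The most delicate step will be the change-of-rings identification $\Ext^{n-i}_{P'}(A,P') \simeq \Ext^i_P(A,P)$ (with correct graded shifts) when $\ker(P' \twoheadrightarrow P) = JP'$ is not generated by a regular sequence on $G$; this has to be handled via the relative dualizing complex of $P$ over $R$ (which is clean because $P$ is smooth over $R$), with the $R$-flatness of $A$ ensuring that the resulting spectral sequence degenerates favourably. Once this bookkeeping is in place, Proposition~\ref{p:art}, local duality, and $(\ast)$ fit together to give freeness by descending induction.
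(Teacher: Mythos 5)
Your strategy is essentially an unwinding of what the paper delegates to a citation: the paper's own proof invokes \cite[Corollary 6.9]{KK} directly, after observing that the only place the Du Bois hypothesis enters is \cite[Proposition 5.1]{KK}, which Proposition \ref{p:art} replaces. Your preliminary steps are fine: the finite graded free resolution of $A$ over $P$ (minimality plus Nakayama plus the syzygy theorem applied to the fibre), the universal coefficient sequence $(\ast)$, the reduction to Artinian $R$ via the local criterion of flatness interleaved with descending induction on $i$, and the length identity $\ell_R(H^i_\nn(A)_d)=\ell_R(R)\cdot\dim_\kk H^i_\nn(A/\ttt A)_d$ obtained by running Proposition \ref{p:art} along a composition series of $R$. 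The closing count (length equal to $\ell(R)$ times the minimal number of generators forces freeness over an Artinian local ring) is also correct.

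The genuine gap is exactly the step you defer as ``the most delicate'': converting the length identity for $H^i_\nn(A)$ into one for $\Ext^{n-i}_P(A,P)$. This is not bookkeeping. Writing $R=G/J$ with $G$ Artinian Gorenstein and $P'=G[x_1,\dots,x_n]$, the change of rings gives
\[R\Hom_{P'}(A,P')\;\cong\;R\Hom_P\bigl(A,\Hom_G(R,G)\otimes_R P\bigr)\;=\;R\Hom_P(A,\omega_R\otimes_R P),\]
since $\Ext^{>0}_G(R,G)=0$; so graded local duality over $P'$ relates $H^i_\nn(A)$ to $\Ext^{n-i}_P(A,\omega_R\otimes_R P)$, not to $\Ext^{n-i}_P(A,P)$. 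Equivalently, the degreewise $R$-linear dual of $\Hom_P(F_\bullet,P)$ computes $H^\bullet_\nn(A)$ on the nose only when $\Hom_R(-,R)$ is exact, i.e.\ when the Artinian ring $R/\ttt^k$ is self-injective; otherwise a spectral sequence with $\Ext^p_R(-,R)$ correction terms intervenes, and freeness of the local cohomology pieces does not formally transfer to the Ext pieces. The two targets agree precisely when $\omega_R$ is free, i.e.\ when $R/\ttt^k$ is Gorenstein --- true in the application to Theorem \ref{the1} (there $R=K[t]_{(t)}$ is a DVR, so $R/\ttt^k$ is a complete intersection), but not implied by the standing hypothesis that $R$ is merely a homomorphic image of a Gorenstein local ring. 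So, as written, your argument establishes freeness of the wrong modules in general, and the missing relative-duality statement is precisely the content the paper imports from \cite[Corollary 6.9]{KK}. If you add the hypothesis that $R$ is Gorenstein (or restrict to the case needed for Theorem \ref{the1}), your sketch can be completed into a self-contained proof; for the proposition as stated you still need the Koll\'ar--Kov\'acs machinery or a genuinely relative duality argument carried out at the level of complexes.
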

\begin{proof}
Let  $X=\Spec(A_{\nn})$ and $Y=\Spec(R)$. Then $f:X\to Y$ is a flat, essentially of finite type morphism of local schemes which is embeddable in a Gorenstein morphism. Furthermore, if $y\in Y$ is the closed point, the fiber $X_y$ is the affine scheme $\Spec(A_{\nn}/\ttt A_{\nn})$. In \cite[Corollary 6.9]{KK} it is proved that, if in addition the schemes are essentially of finite type over $\CC$ and $X_y$ has Du Bois singularities, then $h^{-i}(\omega^{\bullet}_{X/Y})$ is flat over $Y$ for any $i\in\Z$, where $\omega^{\bullet}_{X/Y}$ denotes the relative dualizing complex of $f$ (see \cite[Section 45.24]{SP} for basic properties of relative dualizing complexes). 
We note  that the proof of  \cite[Corollary 6.9]{KK} holds as well by replacing the assumption that  $X_y$ has  Du Bois singularities with the (weaker) assumption that $X_y$ has cohomologically full singularities. In fact, the Du Bois assumption is used only in the proof of \cite[Proposition 5.1]{KK}, that as we noticed in Proposition \ref{p:art} holds true even under the assumption that the special fiber is cohomologically full.

So, also under our assumptions, we have that $h^{-i}(\omega^{\bullet}_{X/Y})$ is flat over $Y$ for any $i\in\Z$. But $\omega^{\bullet}_{X/Y}$ is the sheafication of $R\Hom(A_{\nn},P_{\nn})[n]$, so
$\Ext^{n-i}_{P_{\nn}}(A_{\nn},P_{\nn})\cong \Ext^{n-i}_{P}(A,P)_{\nn}$ is a flat $R$-module. So $\Ext^{n-i}_{P}(A,P)$ is a flat $R$-module, and therefore $\Ext_P^{n-i}(A,P)_j$ is a flat $R$-module for all $j\in\Z$. Being $\Ext_P^{n-i}(A,P)$ finitely generated as $P$-module, $\Ext_P^{n-i}(A,P)_j$ is actually a finitely generated flat, and so free, $R$-module for any $j\in\Z$. In conclusion,   $\Ext_P^{n-i}(A,P)$ is a direct sum of free $R$-modules and hence it is a free $R$-module itself.
\end{proof}

We are ready to prove Theorem \ref{the1}:
\begin{proof}[Proof of Theorem \ref{the1}]
Set  $J=\init(I)$ and let   $w=(w_1,\ldots ,w_n)\in \N^n$ be a weight such that $J=\init_w(I)$ (see, for example, \cite[Proposition 1.11]{sturmfels}). Let $t$ be a new indeterminate over $K$, $R=K[t]_{(t)}$ and $P=R[x_1,\ldots ,x_n]$. Provide $P$ with the grading given by
$\deg(x_i)=1$ and $\deg(t)=0$. By considering the $w$-homogenization $\homo_w(I)\subseteq P$, set $A=P/\homo_w(I)$. It is well known that the inclusion $R\to A$ is flat, $A/(t)\cong S/J$ and $A\otimes_RK(t)\cong (S/I)\otimes_KK(t)$. In particular, by Proposition \ref{p:lyu}, the special fiber $(A/(t))_{\nn}$, where $\nn$ is the unique homogeneous maximal ideal of $P$, is cohomologically full. By Proposition \ref{p:crucial}, hence, $\mathrm{Ext}_P^i(A,P)$ is a free $R$-module for any $i\in\Z$. So $\Ext_P^i(A,P)_j$ is a finitely generated free $R$-module for any $j\in\Z$. Say $\Ext_P^i(A,P)_j\cong R^{r_{i,j}}$. Since $t\in P$ is a nonzero divisor on $A$ we have the short exact sequence 
\[0\to A\xrightarrow{\cdot t}A\rightarrow A/(t)\rightarrow 0.\]
Applying $\Hom_P(-,P)$ to it, for all $i\in \Z$ we get:
\[0\to \Coker(\alpha_{i,t})\rightarrow\Ext_P^{i+1}(A/(t),P)\rightarrow \Ker(\alpha_{i+1,t})\rightarrow 0,\]
where $\alpha_{k,t}$ is the multiplication by $t$ on $\Ext_S^k(A,P)$. Notice that there are natural isomorphisms $\Ext_P^{i+1}(A/(t),P)\cong \Ext_S^i(A/(t),P)$ (cf. \cite[Lemma 3.1.16]{BH}).
We thus have, for all $i\in\Z$, the short exact sequence:
\[0\to \Coker(\alpha_{i,t})\rightarrow\Ext_S^i(A/(t),S)\rightarrow \Ker(\alpha_{i+1,t})\rightarrow 0.\]
For any $j\in\Z$, the above short exact sequence induces a short exact sequence of $K$-vector spaces
\[0\to \Coker(\alpha_{i,t})_j\rightarrow\Ext_S^i(A/(t),S)_j\rightarrow \Ker(\alpha_{i+1,t})_j\rightarrow 0.\]
Since $\Ext_P^k(A,P)_j\cong R^{r_{k,j}}$, we get
\[\Ker(\alpha_{i+1,t})_j=0 \ \ \ \mbox{and} \ \ \ \Coker(\alpha_{i,t})_j\cong K^{r_{i,j}}. \]
Therefore 
$\Ext_S^i(A/(t),S)_j\cong K^{r_{i,j}}$. On the other hand 
\begin{eqnarray*}
(\Ext_{S}^i(S/I,S)_j)\otimes_KK(t)\cong \Ext_{S\otimes_KK(t)}^i((S/I)\otimes_KK(t),S\otimes_KK(t))_j\cong \\
\Ext_{P\otimes_RK(t)}^i(A\otimes_RK(t),P\otimes_RK(t))_j\cong (\Ext_P^i(A,P)_j)\otimes_RK(t)\cong K(t)^{r_{i,j}}.
\end{eqnarray*}
Therefore $\Ext_S^i(S/J,S)_j\cong K^{r_{i,j}}\cong \Ext_{S}^i(S/I,S)_{j}$ for all $i,j\in\Z$. By Grothendieck local duality we have:
\[h^{ij}(S/I)=\dim_K\Ext_S^{n-i}(S/I,S)_{j-n} \mbox{ \ \ \ and \ \ \ } h^{ij}(S/J)=\dim_K\Ext_S^{n-i}(S/J,S)_{j-n}\]
for all $i,j\in\Z$, so we conclude.
\end{proof}

\begin{remark}\label{r:grad}
The proof of Theorem  \ref{the1}   works also for more general gradings.  Assume $S=K[x_1,\dots, x_n]$ is equipped with a $\Z^m$-graded structure such that $\deg(x_i)\in \N^m\setminus \{0\}$.  Let $I\subseteq S$ be  a $\Z^m$-graded  ideal such that $\init(I)$ is square-free, then
\[\dim_K H^i_{\mm}(S/I)_v=\dim_K H^i_{\mm}(S/\init(I))_v \ \ \ \forall \ i\in\N, \ v\in \Z^m.\]
\end{remark}

\begin{remark}
 Theorem \ref{the1}  holds as well  under the assumption that  $S/\init(I)$ is  cohomologically full. 
 Examples of cohomologically full rings arise form known ones via flat extensions. For example, for a sequence $a=a_1, \ldots ,a_n$ of positive integers  one can consider the $K$-algebra map $\phi_a:S\to S$  defined by $\phi_a(x_i)=x_i^{a_i}$, which is indeed a flat extension.  Since cohomologically fullness is preserved under flat extensions (see \cite[Lemma 3.4]{DDM}) one has that $S/\phi_a(J)$ is cohomologically full if $S/J$ is cohomologically full. In particular, by Proposition \ref{p:lyu} we have that $S/\phi_a(J)$ is cohomologically full if $J$ is a square-free monomial ideal. 
Hence the conclusion of Theorem \ref{the1} holds also if $\init(I)=\phi_a(J)$ where $J$ is square-free and $a$ is any sequence of positive integers. 
\end{remark}

Since  the extremal Betti numbers of $S/I$ can be described in terms of $h^{ij}(S/I)$ (cf. \cite{Ch}), we have:

\begin{corollary}
\label{c:1}
Let $I\subseteq S$ be a homogeneous ideal such that $\init(I)$ is square-free. Then the extremal Betti numbers of $S/I$ and those of $S/\init(I)$ coincide  (positions and values). In particular, $\depth S/I=\depth S/\init(I)$ and  $\reg S/I=\reg S/\init(I)$.
\end{corollary}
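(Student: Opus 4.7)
The plan is to deduce this corollary as a direct consequence of Theorem~\ref{the1}, using the general principle that the extremal Betti numbers of a finitely generated graded $S$-module can be read off from the local cohomology tables via a duality.

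First, I would invoke Theorem~\ref{the1} to obtain the equalities $h^{ij}(S/I)=h^{ij}(S/\init(I))$ for every $i,j\in \Z$. This is the heart of the argument; everything else is a formal consequence.

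Next, I would translate these equalities into statements about Betti numbers. The key tool is the result of Chardin~\cite{Ch} (building on the earlier observation of Bayer--Charalambous--Popescu) which characterizes the positions and values of extremal Betti numbers of a graded $S$-module $M$ purely in terms of the bigraded local cohomology table $(h^{ij}(M))_{i,j}$: a pair $(i,j)$ marks an extremal Betti number of $M$ in position $(i,i+j)$ with value $v$ if and only if the corresponding entry is a ``corner'' of the local cohomology table, more precisely $h^{n-i,\,j+n}(M)=v\neq 0$ and $h^{n-h,\,k+n}(M)=0$ for all $(h,k)\neq (i,j)$ with $h\geq i$, $k\geq j$. Since the tables $(h^{ij}(S/I))$ and $(h^{ij}(S/\init(I)))$ agree entrywise, their corners coincide in both position and value, so the extremal Betti numbers of $S/I$ and of $S/\init(I)$ coincide.

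Finally, the ``in particular'' part is obtained by the standard local cohomology formulas
\[
\depth(S/I)=\min\{\,i : H^i_{\mm}(S/I)\neq 0\,\},\qquad
\reg(S/I)=\max\{\,i+j : h^{ij}(S/I)\neq 0\,\},
\]
and likewise for $S/\init(I)$; since the two tables of $h^{ij}$'s are identical, both invariants agree on the two sides. No real obstacle is expected here: the entire content of Corollary~\ref{c:1} is packaged inside Theorem~\ref{the1} together with Chardin's dictionary between local cohomology corners and extremal Betti numbers, so the proof amounts to a clean citation of both.
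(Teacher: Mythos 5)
Your proposal is correct and follows exactly the paper's route: the paper likewise deduces Corollary~\ref{c:1} immediately from Theorem~\ref{the1} by citing Chardin~\cite{Ch} for the fact that the extremal Betti numbers (positions and values) are determined by the table of $h^{ij}$'s, with the depth and regularity statements following from the standard local cohomology characterizations. Nothing is missing.
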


\begin{remark}\label{r:cmrad}
One could wonder if $S/\sqrt{\init(I)}$ is Cohen-Macaulay whenever $S/I$ is Cohen-Macaulay (independently from the fact that $\init(I)$ is square-free).  In \cite{Va1} it is proved that, if $S/I$ is Cohen-Macaulay,  $\Proj S/\init(I)$ cannot be disconnected by removing a closed subset of codimension larger than $1$ (a necessary condition for the Cohen-Macaulayness of $S/\sqrt{\init(I)}$). However, we present two determinantal examples,  one  for lex and the other for revlex, such that $S/I$ is Cohen-Macaulay but $S/\sqrt{\init(I)}$ not.  
 \begin{itemize} 
 \item[(1)] 
 Let $S=K[x_1,\ldots ,x_7]$ and $I$ be the ideal of $2$-minors of :
\[\begin{pmatrix}
x_1+x_2 & x_5 & x_4 \\
-x_5+x_6 & x_3+x_7 & x_5 \\
x_4+x_7 & x_1-x_3 & x_5+x_7
\end{pmatrix}
\]
It turns out that $S/I$ is a $3$-dimensional Cohen-Macaulay domain.   With respect to  lex, one has: 
\[\init(I)=(x_1x_5, x_4x_5, x_1x_6, x_1x_4, x_1x_7, x_1^2, x_3x_4, x_2x_5, x_1x_3, x_2x_6x_7, x_2x_4x_7, x_3x_5^2, x_2x_3x_6).\]
One can check  that $\depth S/\sqrt{\init(I)}=2$.

\item[(2)]  Let $S=K[x_1,\ldots ,x_9]$ and $I$ the ideal of 2-minors of :

\[\begin{pmatrix}
x_3+x_7 & x_6 & x_1 & x_5 \\
x_9 & x_4+x_5 & x_7 & x_1+x_2 \\
x_3 & x_3 & x_7 & x_7-x_8
\end{pmatrix}
\]
It turns out  that $S/I$ is a $3$-dimensional Cohen-Macaulay ring.  With respect to  revlex one has: 
\begin{eqnarray*}
\init(I)= (x_1x_7, x_1x_3, x_3x_7, x_4x_7, x_4x_8, x_3x_4, x_2x_7, x_3x_5, x_2x_3, x_6x_7, x_3^2, x_1^2, x_4x_5,x_1x_4,x_7x_8, \\
x_7^2, x_3x_9,x_5x_6x_9, x_4x_6x_9, x_2x_6x_9, x_1x_6x_9, x_1x_8^2, x_1x_5x_9,x_1x_6x_8, x_5x_6x_8, x_2x_6x_8, x_2x_5x_8^2x_9).
\end{eqnarray*}
One can check  that $\depth S/\sqrt{\init(I)}=2$.
\end{itemize} 
\end{remark}

Corollary \ref{c:1} and  Theorem \ref{BaSt}  show that  a square-free initial ideal and the revlex generic initial ideal have important features in common. 
Next we show that from other perspectives a square-free initial ideal (when it exists!) behaves better than the revlex generic initial ideal. 
We recall first some definitions.  Let $A=S/J$ where  $J\subseteq S$  is  a  homogeneous ideal. 
\begin{compactenum}
\item $A$ is Buchsbaum if for any homogeneous system of parameters $f_1,\ldots ,f_d$ of $A$,
\[(f_1,\ldots ,f_{i-1}):f_i=(f_1,\ldots ,f_{i-1}):\mm \ \ \ \forall \ i=1,\ldots ,d.\]
\item $A$ is generalized Cohen-Macaulay  if $H_{\mm}^i(A)$ has finite length for all $i<\dim A$.
\item For $c\in \N$, $A$ is Cohen-Macaulay in codimension $c$ if $A_{\pp}$ is Cohen-Macaulay for any prime ideal $\pp$ of $A$ such that $\height \pp\leq \dim A-c$.
\item For $r\in\N$, $A$ satisfies Serre's $(S_r)$ condition if $\depth A_{\pp}\geq \min\{r,\height \pp\}$ for any prime ideal $\pp$ of $A$.
\end{compactenum}

\begin{remark}
If $J\subseteq S$ is an ideal and $\pp\subseteq S$ is a prime ideal of height $h$ containing $J$, then for all $k\in\N$:
\begin{align*}
\depth(S_{\pp}/JS_{\pp})\geq k & \iff H_{\pp S_{\pp}}^i(S_{\pp}/JS_{\pp})=0 \ \ \forall \ i<k \\
& \iff \mathrm{Ext}^{h-i}_{S_{\pp}}(S_{\pp}/JS_{\pp},S_{\pp})=0 \ \ \forall \ i<k \\
& \iff (\mathrm{Ext}^{h-i}_{S}(S/J,S))_{\pp}=0 \ \ \forall \ i<k.
\end{align*}
Recall that $S/J$ is pure if $\dim S/\pp=\dim S/J$ for all associated prime ideal $\pp$ of $J$. Since
\[\dim \mathrm{Ext}^k_{S}(S/J,S)=\sup\{n-\height \pp: \ \pp\in\Spec S, \ (\mathrm{Ext}^k_{S}(S/J,S))_{\pp}\neq 0\},\]from the equivalences above can deduce that:
\begin{compactenum}
\item $S/J$ is pure $\iff$ $\dim \mathrm{Ext}^{n-i}_{S}(S/J,S)<i \ \ \forall \ i<\dim S/J$.
\item $S/J$ is generalized Cohen-Macaulay $\iff$ $\dim \mathrm{Ext}^{n-i}_{S}(S/J,S)\leq 0 \ \ \forall \ i<\dim S/J$.
\item $S/J$ is Cohen-Macaulay  in codimension $c$ $\iff$ $\dim \mathrm{Ext}^{n-i}_{S}(S/J,S)<c \ \ \forall \ i<\dim S/J$.
\item $S/J$ satisfies $(S_r)$ for $r\geq 2$ $\iff$ $\dim \mathrm{Ext}^{n-i}_{S}(S/J,S)\leq i-r \ \ \forall \ i<\dim S/J$.
\end{compactenum}
For the latter equivalence, the assumption $r\geq 2$ is needed to guarantee the purity of $S/J$,  see \cite[Lemma (2.1)]{Sc79} for details. 
\end{remark}

\begin{remark}
Given a homogeneous ideal $J\subseteq S$, if $A=S/J$ is Buchsbaum then it is generalized Cohen-Macaulay, but the converse does not hold true in general. If $H_{\mm}^i(A)$ is concentrated in only one degree for all $i<\dim A$, however, $A$ must be Buchsbaum by \cite[Theorem 3.1]{Sc}. If $J$ is a square-free monomial ideal and $A$ is generalized Cohen-Macaulay, then it turns out that $H_{\mm}^i(A)=(H_{\mm}^i(A))_0$ for all $i<\dim A$. In particular, for a square-free monomial ideal $J\subseteq S$ we have that $S/J$ is Buchsbaum if and only if $S/J$ is generalized Cohen-Macaulay. 
\end{remark}

Putting together the remarks above and Theorem \ref{the1} we get:

\begin{corollary}\label{c:2}
Let $I\subseteq S$ be a homogeneous ideal such that $\init(I)$ is square-free. Then
\begin{compactitem}
\item[{\rm (i)}] $S/I$ is generalized Cohen-Macaulay if and only if $S/I$ is Buchsbaum if and only if $S/\init(I)$ is Buchsbaum;
\item[{\rm (ii)}] For any $r\in\N$, $S/I$ satisfies the $(S_r)$ condition if and only if $S/\init(I)$ satisfies the $(S_r)$ condition;
\item[{\rm (iii)}] For any $c\in\N$, $S/I$ is Cohen-Macaulay in codimension $c$ if and only if $S/\init(I)$ is Cohen-Macaulay in codimension $c$.
\end{compactitem}
\end{corollary}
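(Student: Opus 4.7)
The plan is to reduce all three items to statements about the Krull dimensions of the graded Ext modules $\Ext^{n-i}_S(S/J,S)$, and then transfer those via Theorem \ref{the1}. First I would observe that the argument in the proof of Theorem \ref{the1} actually establishes the stronger isomorphism of graded $K$-vector spaces $\Ext^i_S(S/I,S)_j\cong \Ext^i_S(S/\init(I),S)_j$ for all $i,j$ (equivalently, apply Grothendieck local duality to the equality $h^{ij}(S/I)=h^{ij}(S/\init(I))$). In particular these two Ext modules share the same Hilbert function. Since the Krull dimension of a finitely generated graded $S$-module is determined by its Hilbert function (as $1+\deg$ of the Hilbert polynomial, when nonzero), I obtain $\dim \Ext^{n-i}_S(S/I,S)=\dim \Ext^{n-i}_S(S/\init(I),S)$ for every $i$, together with $\dim S/I=\dim S/\init(I)$.

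With this dimension transfer in hand, parts (ii) and (iii) are essentially formal. The first of the two preceding remarks characterizes each of the conditions $(S_r)$, Cohen--Macaulayness in codimension $c$, and purity by an explicit upper bound on $\dim \Ext^{n-i}_S(S/J,S)$ for $i<\dim S/J$. Since the two sides of those bounds are equal for $S/I$ and $S/\init(I)$, the properties hold for $S/I$ exactly when they hold for $S/\init(I)$. The only slightly delicate case is $(S_r)$ for $r\leq 1$, where purity is not automatic, but purity itself is given by an Ext-dimension criterion in the same remark, so it transfers as well.

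For part (i), the implication Buchsbaum $\Rightarrow$ generalized Cohen--Macaulay is standard and already recorded in the second remark, so the substantive direction is to show that generalized Cohen--Macaulayness of $S/I$ implies Buchsbaumness. Assuming $S/I$ is generalized Cohen--Macaulay, the dimension transfer above shows $S/\init(I)$ is also generalized Cohen--Macaulay; because $\init(I)$ is square-free, the second remark then guarantees that $H^i_\mm(S/\init(I))$ is concentrated in degree $0$ for every $i<\dim S/\init(I)$. Theorem \ref{the1} now lifts this concentration back to $S/I$, and Schenzel's criterion \cite[Theorem 3.1]{Sc} cited in the second remark yields that $S/I$ is Buchsbaum. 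The equivalence of Buchsbaumness of $S/I$ and of $S/\init(I)$ then follows by running the same argument in both directions.

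The main conceptual point is realizing that Theorem \ref{the1} controls not only the local cohomology of $S/I$ at $\mm$, but---through local duality---the full graded Hilbert function of the global modules $\Ext^{n-i}_S(S/I,S)$, which is precisely the information encoded in the criteria of the two preceding remarks. I do not foresee a genuine technical obstacle beyond recording this observation cleanly; everything else is a matter of matching up the dimension bounds in the remarks with the conditions listed in (i), (ii), (iii).
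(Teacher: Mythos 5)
Your argument is correct and is exactly what the paper intends: its proof of Corollary \ref{c:2} consists of the single line ``putting together the remarks above and Theorem \ref{the1}'', and your write-up supplies precisely those details (graded local duality transfers the Hilbert functions, hence the Krull dimensions, of the modules $\Ext^{n-i}_S(-,S)$, and Schenzel's criterion handles the Buchsbaum direction of (i)). One small correction: in part (ii) with $r\leq 1$, purity is not the same as $(S_1)$, so transferring purity does not by itself settle that case; but the case is trivial anyway, since $(S_0)$ is vacuous and a square-free (hence radical) initial ideal forces $I$ itself to be radical, so both $S/I$ and $S/\init(I)$ are reduced and automatically satisfy $(S_1)$.
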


\section{Ideals with square-free initial ideals}\label{s:sqg}

In this section, we will outline some properties and non-properties of  ideals with square-free initial ideals. After that we will show three special classes of these ideals appeared in the literature, pointing at some consequences of Theorem \ref{the1} for each of them.  

We first observe that by \cite{BCP} two ideals with the same degrevlex generic initial ideal have the same extremal Betti numbers. Hence a stronger version of  Herzog's conjecture \ref{Herzogcon} would be  the assertion that if $I$ is an ideal with a square-free initial ideal $J$  then the corresponding degrevlex generic initial ideals coincide. 
The following example  shows that the stronger statement is actually false.

\begin{example1}
\label{ingin}
Let $S=K[x_{ij}: 1\leq i,j,\leq 4]$  and $<$ be the degrevlex order associated to the total order $x_{11}>x_{12}>x_{13}>x_{14}>x_{21}>\dots >x_{44}$. Let  $I$ be the ideal of $2$-minors of  $(x_{ij})$.   Then $J=\init(I)=(x_{ij}x_{hk} : i<h \mbox{ and } j>k)$  is square-free and quadratic  and  $\gin(I)$ and $\gin(J)$ differ already in degree $2$. Indeed,  degree $2$ part   of $\gin(I)$ is  $( x_{ij} : i=1,2 \mbox{ and } j=1,2,3,4)^2$ and the degree  $2$ part of $\gin(J)$ is obtained from that of $\gin(I)$ by replacing $x_{23}x_{24}, x_{24}^2$ 
 with $x_{11}x_{31}, x_{12}x_{31}$. 
\end{example1}

When $K$ has positive characteristic then  $S/J$ is $F$-pure for any square-free monomial ideal $J\subseteq S$ (cf. \cite[Proposition 5.38]{HR}). However, $S/I$  may fail $F$-purity when $I$ has a  square-free initial ideal:

\begin{example1}\label{ex:f-pure}
Let $S=K[x_1,\ldots, x_5]$ where $K$ has characteristic $p>0$, and $I$ the ideal generated by the $2$-minors of the matrix:
\[\begin{pmatrix}
x_4^2+x_5^a & x_3 & x_2 \\
x_1 & x_4^2 & x_3^b-x_2
\end{pmatrix}.\]
Note that, if $\deg(x_4)=a$, $\deg(x_1)=\deg(x_3)=1$, $\deg(x_2)=b$ and $\deg(x_5)=2$, the ideal $I$ is homogeneous. Singh proved in \cite[Theorem 1.1]{Si} that, if $a-a/b>2$ and $\mathrm{GCD}(p,a)=1$, then $S/I$ is not $F$-pure. However, considering lex as term order, one has
\[\init(I)=(x_1x_3, \ x_1x_2, \ x_2x_3).\]
\end{example1}

On the other hand we have: 

\begin{proposition}\label{p:full}
Let $I\subseteq S$ be a homogeneous ideal with a with square-free initial ideal. Then $S/I$ is cohomologically full. 
\end{proposition}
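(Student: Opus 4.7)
The plan is to reuse the flat family constructed in the proof of Theorem~\ref{the1} and transfer cohomological fullness across the deformation. Fix a weight $w\in\N^n$ with $\init_w(I)=J$ and set $R=K[t]_{(t)}$, $P=R[x_1,\ldots,x_n]$, $A=P/\homo_w(I)$, with $\nn$ the homogeneous maximal ideal of $P$. Then $A$ is $R$-flat, the closed fiber $A/\ttt A$ is $S/J$, and the generic fiber is $(S/I)\otimes_K K(t)$. By Proposition~\ref{p:lyu} the closed fiber $(S/J)_\mm$ is cohomologically full, and by Proposition~\ref{p:crucial} all the Ext modules $\Ext_P^i(A,P)$ are free $R$-modules.

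The central step is to lift cohomological fullness from the closed fiber to the generic fiber of this family. The mechanism I have in mind mirrors Proposition~\ref{p:art}: given any nilpotent thickening $(B,\mm_B)\twoheadrightarrow ((S/I)\otimes_K K(t))_{\mm'}$ (the reducedness of $(S/I)_\mm$ — which itself follows from square-freeness of $\init(I)$ by the Hilbert-function chain $\init(I)\subseteq\init(\sqrt I)\subseteq\sqrt{\init(I)}=\init(I)$ — makes every surjection in the defining condition of cohomological fullness of this type), I would spread $B$ out to a thickening of $A_\nn$ over $R$, reduce modulo $t$ to land in a nilpotent thickening of $(S/J)_\mm$, invoke cohomological fullness of $S/J$ to obtain surjectivity of the induced map on local cohomology there, and then use the $R$-freeness of the Ext modules — equivalently, the flatness of the cohomology sheaves of the relative dualizing complex $\omega^\bullet_{X/Y}$ established in Proposition~\ref{p:crucial} — to propagate this surjectivity to a generic value of $t$.

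To finish, it remains to descend cohomological fullness along the faithfully flat extension $K\to K(t)$, which is essentially \cite[Lemma~3.4]{DDM}, yielding cohomological fullness of $(S/I)_\mm$ itself.

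The main obstacle will be the middle step: producing the lifting of an arbitrary nilpotent thickening of the generic fiber to a thickening over $R$, and verifying that the Ext-freeness really forces surjectivity on local cohomology to propagate from $t=0$ to generic $t$. In spirit this is a formal consequence of the flat-duality formalism of \cite{KK}, whose scope was already extended to cohomologically full special fibers inside the proof of Proposition~\ref{p:crucial}; in practice, one either cites an appropriate deformation-stability statement from \cite{DDM} for cohomological fullness directly, or reruns the long-exact-sequence argument of Proposition~\ref{p:art} with an arbitrary thickening $B$ playing the role of the iterated $R/\ttt$-quotients in a composition series there.
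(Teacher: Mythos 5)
Your setup (the flat family $A=P/\homo_w(I)$ over $R=K[t]_{(t)}$, cohomological fullness of the special fiber via Proposition~\ref{p:lyu}, and the final descent along $K\to K(t)$ via \cite[Lemma~3.4]{DDM}) matches the paper, but the central step as you describe it has a genuine gap. In Definition~\ref{cofull} the test object $B$ is an \emph{arbitrary} Noetherian local ring surjecting onto $((S/I)\otimes_K K(t))_{\mm'}$ with the same reduction; it need not be essentially of finite type over anything, and there is no mechanism for ``spreading it out to a thickening of $A_\nn$ over $R$'' or for ``reducing it modulo $t$.'' Moreover, the $R$-freeness of $\Ext^i_P(A,P)$ from Proposition~\ref{p:crucial} controls the local cohomology of the \emph{fibers of $A$}, not the local cohomology of an auxiliary thickening $B$, so it cannot be used to propagate surjectivity of $H^i_{\mm_B}(B)\to H^i(\,\cdot\,)$ from $t=0$ to generic $t$. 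In short, the primary mechanism you propose does not run, and Proposition~\ref{p:crucial} plays no role here.

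The correct route is the one you mention only as a fallback: cite the deformation statement \cite[Theorem~3.1]{DDM} directly. Since $t$ is a nonzerodivisor on $A$ and $A/tA\cong S/J$ is cohomologically full, $A$ itself is cohomologically full; then $A_t\cong (S/I)\otimes_K K(t)$ is cohomologically full by \cite[Lemma~3.4]{DDM} (localization/flat extension), and one descends along the faithfully flat extension $K\to K(t)$ by the same lemma to conclude that $S/I$ is cohomologically full. This is exactly the paper's proof; no long-exact-sequence argument in the style of Proposition~\ref{p:art} and no Ext computation are needed. (Your parenthetical observation that $I$ is radical because $\init(I)\subseteq\init(\sqrt I)\subseteq\sqrt{\init(I)}=\init(I)$ is correct but is not used.)
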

\begin{proof}
As in the proof of Theorem \ref{the1}, setting $J=\init(I)$, take a weight $w=(w_1,\ldots ,w_n)\in \N^n$ such that $J=\init_w(I)$. Let $t$ be a new indeterminate over $K$, $R=K[t]_{(t)}$ and $P=R[x_1,\ldots ,x_n]$. By considering the $w$-homogenization $\homo_w(I)\subseteq P$, set $A=P/\homo_w(I)$. Since $A/(t)\cong S/J$ is cohomologically full and $t$ is a non-zero divisor on $A$, then $A$ is cohomologically full by \cite[Theorem 3.1]{DDM}. So, $A_t\cong (S/I)\otimes_KK(t)$ is cohomologically full by \cite[Lemma 3.4]{DDM}, and therefore (again by \cite[Lemma 3.4]{DDM}), $S/I$ is cohomologically full.
\end{proof}

\begin{corollary}
Let $I\subseteq S$ be a   homogeneous ideal with a  square-free initial ideal. If $J\subseteq S$ is a homogeneous ideal such that $\sqrt{J}=I$, then
\[\depth S/J\leq \depth S/I \ \ \ \mbox{ and } \ \ \ \reg(S/J)\geq \reg(S/I).\]
\end{corollary}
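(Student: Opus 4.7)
The strategy is to apply Proposition~\ref{p:full} to the tautological surjection $S/J\twoheadrightarrow S/I$ in order to produce surjective maps on local cohomology, and then to extract the two inequalities by the standard local cohomological descriptions of depth and regularity.

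More precisely, since $J\subseteq\sqrt{J}=I$, there is a natural graded surjection $\phi\colon S/J\twoheadrightarrow S/I$. Its kernel $I/J$ is nilpotent (being $\sqrt{J}/J$) and $S/I$ is reduced because $I$ is radical; hence $I/J$ is exactly the nilradical of $S/J$, and the induced map $\overline{\phi}$ on reduced rings is an isomorphism. By Proposition~\ref{p:full}, $S/I$ is cohomologically full. Applying Definition~\ref{cofull} to $\phi$, after localizing at the maximal homogeneous ideal $\mathfrak{m}$ (which does not change local cohomology supported at $\mathfrak{m}$), one obtains that each induced map
\[
H^i_\mathfrak{m}(S/J)\longrightarrow H^i_\mathfrak{m}(S/I)
\]
is surjective. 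As these maps are graded, they are surjective in every single degree.

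From here both conclusions are immediate. For depth, if $H^i_\mathfrak{m}(S/J)=0$, then its surjective image $H^i_\mathfrak{m}(S/I)$ also vanishes; so the least $i$ with nonzero local cohomology can only weakly increase when passing from $S/J$ to $S/I$, giving $\depth(S/J)\le\depth(S/I)$. For regularity, set $a_i(M)=\sup\{j:H^i_\mathfrak{m}(M)_j\neq 0\}$; the degree-wise surjectivity above forces $a_i(S/I)\le a_i(S/J)$ for every $i$, so using $\reg(M)=\max_i\{a_i(M)+i\}$ we conclude $\reg(S/I)\le\reg(S/J)$.

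There is essentially no substantial obstacle here, because the real content has already been packaged into Proposition~\ref{p:full}; the argument reduces to observing that $\phi$ satisfies the hypotheses of Definition~\ref{cofull}. The only point meriting any care is the translation of Definition~\ref{cofull} (stated for Noetherian local rings) to the standard graded setting, which is routine via localization at $\mathfrak{m}$.
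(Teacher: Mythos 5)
Your proof is correct and follows exactly the route the paper intends: the paper's own proof is the single line ``It follows by Proposition \ref{p:full},'' and what you have written is precisely the unpacking of that citation (the surjection $S/J\twoheadrightarrow S/I$ is an isomorphism modulo nilradicals, cohomological fullness of $S/I$ gives surjectivity on graded local cohomology, and the depth and regularity inequalities follow from the standard vanishing characterizations). No issues.
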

\begin{proof}
It follows by Proposition \ref{p:full}.
\end{proof}

Given an ideal $I\subseteq S$, the cohomological dimension of $I$ is defined as:
\[\cd(S,I)=\max\{i\in\N:H^i_I(S)\neq 0\}.\]
One may ask what is the relationship  between the cohomological dimension of $I$ and that of $\init(I)$. In general, without  further assumptions on $\init(I)$,   they are unrelated. 
\begin{example1}
Recall that for any ideal $J\subseteq S$ of height $h$ and generated by $r$ polynomials $h\leq \cd(S,J)\leq r$ and furthermore, $\cd(S,J)=\cd(S,\sqrt{J})$.
\begin{enumerate}
\item For any ideal $I\subseteq S$ of height $h$, the generic initial ideal $\gin(I)$ w.r.t. degrevlex has cohomological dimension $h$: in fact $\sqrt{\gin(I)}=(x_1,\ldots, x_h)$. However,  there are many ideals $I$ of height $h$  for which $\cd(S,I)> h$.  Hence there are ideals for which 
\[\cd(S,I)>\cd(S,\init(I)).\]
holds. 
\item In \cite[Example 2.14]{Va1} has been considered the ideal \[I=(x_1x_5+x_2x_6+x_4^2, \ x_1x_4+x_3^2-x_4x_5, \ x_1^2+x_1x_2+x_2x_5)\subseteq S=K[x_1,\ldots ,x_6].\]
It turns out that $I$ is a height $3$ complete intersection and $S/I$ is a normal domain (notice that in \cite[Example 2.14]{Va1} there is a typo in the last equation). By considering lex, we have:
\[\sqrt{\init(I)}=(x_1,x_2,x_3)\cap (x_1,x_3,x_6)\cap (x_1,x_2,x_5)\cap (x_1,x_4,x_5).\]
However $S/\sqrt{\init(I)}$ has depth $2$, so $\cd(S,\sqrt{\init(I)})=6-2=4$ by \cite{Lyu}. Therefore $\cd(S,\init(I))=4$, but $\cd(S,I)=3$. So,  there are ideals for which  
\[\cd(S,I)<\cd(S,\init(I)).\]
holds. 
\end{enumerate}
\end{example1}

\begin{proposition}
\label{cdim} 
If $I\subseteq S$ is a homogeneous ideal and $\init(I)$ is square-free, then
\[\cd(S,I)\geq \cd(S,\init(I)).\]
Furthermore, if $K$ has positive characteristic, then $\cd(S,I)= \cd(S,\init(I))$.
\end{proposition}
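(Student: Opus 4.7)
The plan is to combine three facts already at hand: Lyubeznik's identity $\cd(S,J)=n-\depth(S/J)$ for a squarefree monomial ideal $J$, the depth equality furnished by Corollary \ref{c:1}, and the cohomological fullness of $S/I$ proved in Proposition \ref{p:full}. Setting $J=\init(I)$, Lyubeznik's theorem \cite{Lyu} together with Corollary \ref{c:1} gives $\cd(S,J)=n-\depth(S/J)=n-\depth(S/I)$, so the first inequality reduces to proving $\cd(S,I)\geq n-\depth(S/I)$.

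To prove this, I would invoke Proposition \ref{p:full}: $S/I$ is cohomologically full. For every $k\geq 1$ the surjection $S/I^k\twoheadrightarrow S/I$ is an isomorphism modulo nilpotents, so the definition of cohomologically full yields a surjection $H^j_\mm(S/I^k)\twoheadrightarrow H^j_\mm(S/I)$ for every $j$. Graded local duality dualizes this to an injection $\Ext^{n-j}_S(S/I,S)\hookrightarrow\Ext^{n-j}_S(S/I^k,S)$, and by functoriality this coincides with the transition map at level $k$ in the direct system whose colimit computes $H^{n-j}_I(S)=\varinjlim_k\Ext^{n-j}_S(S/I^k,S)$. Taking $j=\depth(S/I)$, we have $H^j_\mm(S/I)\neq 0$, hence $\Ext^{n-j}_S(S/I,S)\neq 0$; any nonzero element of this space maps injectively into every $\Ext^{n-j}_S(S/I^k,S)$ and therefore represents a nonzero class in the direct limit. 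Consequently $\cd(S,I)\geq n-\depth(S/I)=\cd(S,J)$.

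For the equality in positive characteristic, I would combine the above with the theorem of Peskine-Szpiro: when $S$ is regular of characteristic $p>0$, one has $\cd(S,I)\leq n-\depth(S/I)$ for every ideal $I$. Together with the lower bound just established and Lyubeznik's identity, this forces $\cd(S,I)=\cd(S,J)$. The only delicate point is verifying the compatibility of the injection coming from local duality (applied to cohomological fullness) with the canonical transition map in the $\varinjlim$ description of $H^*_I(S)$; once this routine functoriality check is settled, a single nonzero element of $\Ext^{n-j}_S(S/I,S)$ survives to the colimit and witnesses the cohomological dimension lower bound. I do not anticipate difficulties elsewhere.
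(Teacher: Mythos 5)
Your proof is correct and follows essentially the same route as the paper: cohomological fullness of $S/I$ (Proposition \ref{p:full}) yields $\cd(S,I)\ge n-\depth(S/I)$, which combines with Corollary \ref{c:1} and Lyubeznik's formula $\cd(S,\init(I))=n-\depth(S/\init(I))$, and Peskine--Szpiro closes the positive-characteristic case. The only difference is that where the paper simply cites \cite[Proposition 2.5]{DDM} for the lower bound, you supply its proof (the surjections $H^j_{\mm}(S/I^k)\twoheadrightarrow H^j_{\mm}(S/I)$ dualize to injective transition maps into $\varinjlim_k \Ext^{n-j}_S(S/I^k,S)=H^{n-j}_I(S)$), and that inline argument is the standard one and checks out.
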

\begin{proof}
Under this assumption $S/I$ is cohomologically full by Proposition \ref{p:full}, so $\cd(S,I)\geq n-\depth(S/I)$ by \cite[Proposition 2.5]{DDM}.
However, $\depth(S/I)=\depth(S/\init(I))$ by Corollary \ref{c:1}, and $\cd(S,\init(I))=n-\depth(S/\init(I))$ by \cite{Lyu}.

If $K$ has positive characteristic, $\cd(S,I)\leq n-\depth(S/I)$ by \cite[Proposition 4.1 and following remark]{PS}.
\end{proof}

\begin{example1}
If $K$ has characteristic 0, the inequality in Proposition  \ref{cdim}   may be strict.  For example if $S=K[X]$ where $X=(x_{ij})$ denotes an $r\times s$ generic matrix, the ideal $I\subseteq S$ generated by the size $t$ minors of $X$ has cohomological dimension $rs-t^2+1$ by a result of Bruns and Schw\"anzl in \cite{BrSc}. However, Sturmfels proved in \cite{St} that, if $<$ is lex refining $x_{11}>x_{12}>\ldots >x_{1s}>x_{21}>\ldots >x_{r1}>\ldots >x_{rs}$, then $\init(I)$ is square-free and $S/\init(I)$ is Cohen-Macaulay of dimension $rs-(r-t+1)(s-t+1)$. In particular, once again by \cite{Lyu}, $\cd(S,\init(I))=(r-t+1)(s-t+1)<\cd(S,I)$.
\end{example1}

\subsection{ASL: Algebras with straightening laws}\label{s:ASL} 

Let $A=\oplus_{i\in \N} A_i$ be a graded algebra and let $(H,\prec)$ be a finite poset set. 
 Let $H\to \cup_{i>0}A_i$ be an  injective function. The elements of $H$ will be identified with their images. Given a chain $h_1\preceq h_2 \preceq \dots  \preceq h_s$ of elements of $H$ the corresponding product   $h_1\cdots h_s\in A$ is called  standard monomial. 
 One says that $A$ is an algebra with straightening laws on $H$ (with respect to the given embedding $H$ into $\cup_{i>0}A_i$) if three conditions are satisfied: 
 \begin{compactenum}
\item The elements of $H$ generate  $A$ as a $A_0$-algebra. 
 \item The standard monomials are $A_0$-linearly independent. 
 \item For every pair $h_1, h_2$ of incomparable elements of $H$ there is a relation (called the straightening law) 
 $$h_1h_2=\sum_{j=1}^u  \lambda_j h_{j1}\cdots h_{jv_j}$$ 
 where $\lambda_j\in A_0\setminus \{0\}$,  the  $h_{j1}\cdots h_{jv_j}$  are distinct standard monomials and, assuming that $h_{j1}\preceq \dots \preceq  h_{jv_j}$,   one has $h_{j1}\prec h_1$ and $h_{j1}\prec h_2$ for all $j$. 
 \end{compactenum}
 It then follows  from the three axioms that the standard monomials form a basis of $A$ over $A_0$ and that the 
 straightening laws are indeed the defining equations of $A$ as a quotient of the polynomial ring $A_0[H]=A_0[h : h\in H]$. That is, the kernel $I$ of the canonical surjective map $A_0[H]\to A$ of $A_0$-algebras induced by the function $H\to \cup_{i>0}A_i$ is generated by the straightening laws regarded as elements of $A_0[H]$, i.e., 
\[A=A_0[H]/I \quad \mbox{ with } \quad  I=( h_1h_2-\sum_{j=1}^u  \lambda_j h_{j1}\cdots h_{jv_j} : h_1\not\prec h_2 \not\prec h_1).\]
\begin{remark}
Equipping the polynomial ring  $A_0[H]$ with the $\N$-graded structure induced by assigning to $h$ the degree of its image in $\cup_{i>0}A_i$, then the ideal $I$ is homogeneous.
\end{remark}

The ideal $J=( h_1h_2 : h_1\not\prec h_2 \not\prec h_1)$ of $A_0[H]$ defines a quotient $A_D=A_0[H]/J$ which is an ASL as well, called the discrete ASL associated to $H$.  In \cite{DEP}  it is proved that  $A_D$ is the special fiber of a flat family with general fiber $A$. Indeed, at least when $A_0$ is a field,  one can obtain the same result  by 
 observing that with respect to (weighted) degrevlex associated to a total order on $H$ that refines the given partial order $\prec$ one has $J=\init(I)$. More precisely it  has been observed in \cite[Lemma 5.5]{Co} that   ASL's  can also be defined via  Gr\"obner degenerations.

As we have already said in the introduction,  various kinds of generic determinantal  rings are indeed ASL's. Here it is important to note that the ASL presentation is not, in general, the minimal presentation of the algebra. For example, for generic determinantal rings  the poset consists  of all the subdeterminants of the matrix. Nevertheless the non-minimal presentation is good enough to prove, via deformation to the discrete counterpart, that these classical algebras are Cohen-Macaulay.  As a consequence of Theorem \ref{the1} and Remark \ref{r:grad} we have: 

\begin{corollary} 
\label{ASLCM}
Let $A$ be an ASL over a field $K$ and let $A_D$ be the corresponding discrete counterpart. 
Then $\depth A=\depth A_D$. In particular,  $A$ is Cohen-Macaulay if and only if  $A_D$ is Cohen-Macaulay. 
\end{corollary}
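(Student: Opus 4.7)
The plan is to realize the passage $A \rightsquigarrow A_D$ as an instance of a square-free Gröbner degeneration and then quote Corollary \ref{c:1}.

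First I would fix a linear extension of the partial order on $H$, i.e., a total order on the (finite) set $H$ that refines $\prec$, and use it to order the variables of the polynomial ring $K[H] = K[h : h\in H]$. Let $I\subseteq K[H]$ be the ideal of straightening relations, so that $A = K[H]/I$, and let $J = (h_1 h_2 : h_1\not\prec h_2\not\prec h_1)\subseteq K[H]$, so that $A_D = K[H]/J$. The ideal $J$ is clearly a square-free monomial ideal.

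Next I would invoke the characterization of ASL's via Gröbner bases recalled in the paragraph preceding the corollary (\cite[Lemma 5.5]{Co}): with respect to the (weighted) degrevlex term order determined by the chosen total order on $H$ and the degrees assigned to the $h\in H$, the initial term of each straightening relation $h_1 h_2 - \sum_j \lambda_j h_{j1}\cdots h_{jv_j}$ is exactly $h_1 h_2$, because each standard monomial on the right-hand side starts with an element $h_{j1}$ strictly smaller (in $\prec$, hence in the chosen total order) than both $h_1$ and $h_2$. Thus the straightening relations form a Gröbner basis of $I$ and $\init(I) = J$.

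At this point I would apply Corollary \ref{c:1} (together with Remark \ref{r:grad}, which is needed because the natural grading on $K[H]$ inherited from $A$ may be non-standard: $\deg(h)$ is the degree of the image of $h$ in $A$). Since $\init(I) = J$ is square-free, we obtain
\[
\depth A = \depth(K[H]/I) = \depth(K[H]/\init(I)) = \depth A_D.
\]
Finally, flatness of the Gröbner degeneration gives $\dim A = \dim A_D$, so $A$ is Cohen--Macaulay iff $\depth A = \dim A$ iff $\depth A_D = \dim A_D$ iff $A_D$ is Cohen--Macaulay. The only non-routine step is the identification $\init(I) = J$, but this is precisely the content of the cited lemma and is transparent from the form of the straightening laws.
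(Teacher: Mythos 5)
Your proposal is correct and follows exactly the paper's route: identify $A_D$ as the square-free (weighted) degrevlex Gr\"obner degeneration of $A$ via \cite[Lemma 5.5]{Co}, then apply Theorem \ref{the1} in the form of Corollary \ref{c:1} and Remark \ref{r:grad} to handle the possibly non-standard grading on $K[H]$. The paper leaves all of this implicit, so your write-up is simply a fleshed-out version of the same argument.
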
 

\begin{remark}
In \cite{Mi},   Miyazaki proved that if $A$ is a Cohen-Macualay ASL and  $A_D$ is Buchsbaum then $A_D$ is Cohen-Macualay. 
\end{remark}

\begin{remark} Let  $S=K[x_1,\dots,x_n]$ and let $H$ be the set of square-free monomials different from $1$  ordered by division: for $m_1,m_2\in H$ one sets  $m_1\preceq m_2$ if and only if $m_2|m_1$. Then $S$ can be regarded as an ASL over $H$ with straightening law: 
$$m_1m_2=\GCD(m_1,m_2)\LCM(m_1,m_2).$$
This induces an ASL structure on  every Stanley-Reisner ring $K[\Delta]$ associated with a simplicial complex  $\Delta$ on $n$ vertices. Here the underlying poset is given by the non-empty faces of $\Delta$ ordered by reverse inclusion.  Hence the  discrete ASL associated to $K[\Delta]$ is $K[\sd(\Delta)]$ where $\sd(\Delta)$ is the barycentric subdivision of $\Delta$. So, denoting by $N$ the number of square-free monomials different from $1$ of $S$ and by $S'=K[x_m:m\neq 1 \mbox{ is a square-free monomial of }S]$, by Theorem \ref{the1} and by Grothendieck duality we have
\[\dim \mathrm{Ext}^{n-i}_{S}(K[\Delta],S)=\dim \mathrm{Ext}^{N-i}_{S'}(K[\sd(\Delta)],S') \ \ \forall \ i\in\Z \]
(Krull dimensions). This was already known: in fact, the Krull dimensions of the deficiency modules of a Stanley-Reisner ring are topological invariants by the work of Yanagawa \cite{Ya}.
\end{remark} 
  
 A further application of our main result is a proof of the Eisenbud-Green-Harris conjecture for Cohen-Macaulay standard graded ASLs. This conjecture has been originally presented  in \cite{EGH} in various ways. Here we refer to  the formulation discussed in the fourth section of  \cite{EGH}. 



\begin{theorem}\label{t:EGH}
Let $A$ be a standard graded Cohen-Macaulay ASL on $H$.  Then any  Artinian reduction of  $A$ verifies Eisenbud-Green-Harris conjecture. In other words, the $h$-vector of $A$ is equal to the $f$-vector of some simplicial complex on $|H|-\dim(A)$ vertices.
\end{theorem}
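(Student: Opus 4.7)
The plan is to combine Corollary \ref{ASLCM} with the Clements--Lindström theorem applied to an Artinian reduction of the discrete counterpart $A_D$. By Corollary \ref{ASLCM}, $A_D$ is Cohen--Macaulay of the same Krull dimension $d$ as $A$. Since $A\leadsto A_D$ is a flat Gr\"obner degeneration (\cite[Lemma 5.5]{Co}), $A$ and $A_D$ have identical Hilbert series, hence identical $h$-vectors. For a standard graded Cohen--Macaulay algebra, the $h$-vector coincides with the Hilbert function of any Artinian reduction, so it suffices to exhibit a simplicial complex on $|H|-d$ vertices whose $f$-vector (with the empty face counted) matches the Hilbert function of an Artinian reduction of $A_D$.

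To set this up, write $A_D = K[H]/J$ where $J$ is generated by the squarefree quadratic monomials $x_hx_k$ attached to incomparable pairs $h,k\in H$, and fix a generic linear system of parameters $\theta_1,\dots,\theta_d$. Form $\bar A_D = T/\bar J$ with $T=K[y_1,\dots,y_N]$ and $N=|H|-d$. The ideal $\bar J$ is generated by quadrics and has height $N$, so generic $K$-linear combinations of its quadratic generators form a regular sequence of $N$ quadrics inside $\bar J$, placing $\bar A_D$ in the degree-$(2,\dots,2)$ setting of the Eisenbud--Green--Harris conjecture. One then invokes the Clements--Lindström theorem to produce a squarefree monomial lex-segment ideal $L\subset T$ so that the Hilbert functions of $\bar A_D$ and $T/((y_1^2,\dots,y_N^2)+L)$ agree. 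The latter is the Stanley--Reisner ring of a simplicial complex $\Gamma$ on $N$ vertices, and its Hilbert function is precisely the $f$-vector of $\Gamma$ (with $f_{-1}=1$), hence the $h$-vector of $A$.

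The main obstacle is the invocation of Clements--Lindström: the theorem applies directly only when the pure squares $y_1^2,\dots,y_N^2$ themselves lie in $\bar J$, and this is not automatic for a generic Artinian reduction of a Cohen--Macaulay Stanley--Reisner ring (simple examples already fail for the Stanley--Reisner ring of the boundary of a triangle when viewed outside the ASL setting). The proof must therefore exploit the specific combinatorial structure of $\Delta(H)$ as an order (flag) complex: one either constructs a special linear system of parameters realizing $y_i^2\in \bar J$ for every $i$, or passes to a further Gr\"obner degeneration of $\bar J$ whose initial ideal contains $(y_1^2,\dots,y_N^2)$. Carrying out this reduction is the combinatorial heart of the argument.
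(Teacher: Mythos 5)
Your reduction to the discrete counterpart is exactly the paper's first step: Corollary \ref{ASLCM} (equivalently Corollary \ref{c:1}) gives that $A_D=S/\init(I)$ is Cohen--Macaulay, and flatness of the degeneration gives equality of Hilbert functions, hence of $h$-vectors. But from that point on your argument has a genuine gap, and you have in effect flagged it yourself. The statement you still need --- that the $h$-vector of a Cohen--Macaulay \emph{flag} complex is the $f$-vector of a simplicial complex --- is precisely Kalai's conjecture for Cohen--Macaulay flag complexes, and it is the entire content of the theorem once the degeneration step is done. The paper does not reprove it: it invokes \cite[Theorem 2.1]{CCV}, which produces a Cohen--Macaulay quotient $S/J$ with the same $h$-vector and with $J\supseteq(x_1^2,\dots,x_{n-d}^2)$; after an Artinian reduction one passes to any initial ideal of $J'$, which still contains the pure squares, so the standard monomials are square-free and directly span the face ring of the desired complex. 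What you call ``the combinatorial heart of the argument'' is exactly this cited theorem, and leaving it as a to-do means the proof is not complete.

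Moreover, the specific route you sketch for that missing step does not work as stated. Clements--Lindstr\"om governs Hilbert functions of quotients of $T/(y_1^2,\dots,y_N^2)$, i.e.\ it applies only when the pure squares already lie in the ideal. Knowing merely that $\bar J$ contains a regular sequence of $N$ quadrics and concluding that its Hilbert function is attained by an ideal containing $(y_1^2,\dots,y_N^2)$ is the Eisenbud--Green--Harris conjecture in the $(2,\dots,2)$ case, which is open in general --- so this invocation is circular. Your alternative suggestion (find a degeneration or a special Artinian reduction of the flag ideal whose initial ideal contains all $y_i^2$) is closer in spirit to what \cite{CCV} actually accomplishes, but it is a nontrivial result exploiting the quadratic generation of the Stanley--Reisner ideal of the order complex, not a routine genericity argument. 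To repair the proof, replace the Clements--Lindstr\"om step by a citation of (or a proof of) \cite[Theorem 2.1]{CCV}, and then finish as the paper does by taking an initial ideal of the Artinian reduction.
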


\begin{proof}
Let $|H|=n$, $S=K[x_1,\ldots ,x_n]$ and write $A=S/I$ where $I$ is the ideal generated by the straightening laws. With respect to a suitable term order the ideal $\init(I)$ is generated by square-free quadratic monomials. Furthermore, $\depth(S/\init(I))=\depth(S/I)$ by Corollary \ref{c:1}. So the conclusion follows using \cite{CCV}, because $S/I$ and $S/\init(I)$ have the same Hilbert function: in fact \cite[Theorem 2.1]{CCV} implies that the $h$-vector of $S/\init(I)$ equals the $h$-vector of $S/J$ where $J$ is a homogeneous ideal containing $(x_1^2,\ldots ,x_{n-d}^2)$ with $S/J$ Cohen-Macaulay, where $d=\dim(A)$. Therefore, considering an Artinian reduction $S'/J'$ of $S/J$, where $S'=K[x_1,\ldots ,x_{n-d}]$, $J'\subset S'$ is still a homogeneous ideal containing $(x_1^2,\ldots ,x_{n-d}^2)$ and the Hilbert function of $S'/J'$ equals the $h$-vector of $A$. If $\init(J')$ is any initial ideal of $J'\subset S'$, then the faces of the desired simplicial complex are those $\sigma \subset \{1,\ldots ,n-d\}$ such that $\prod_{i\in\sigma}x_i$ is not in $\init(J')$. 
\end{proof}

\subsection{Cartwright-Sturmfels ideals}\label{s:cs}
Cartwright-Sturmfels ideals were introduced and studied  by Conca, De Negri, Gorla in a series of recent papers \cite{CDG1,CDG2,CDG3,CDG4} inspired by the work \cite{CS}.  We recall briefly their definition and main properties. Given positive integers  $d_1,\dots, d_m$ one considers the polynomial ring  $S=K[x_{ij} : 1\leq i\leq m \mbox{ and }1\leq j\leq d_i]$ with $\Z^m$-graded structure induced by assignment $\deg(x_{ij})=e_i\in \Z^m$. The group $G=\GL_{d_1}(K)\times \cdots \times \GL_{d_m}(K)$ acts on $S$  as the group of multigraded $K$-algebra automorphisms. The Borel subgroup  $B=B_{d_1}(K)\times \cdots  \times B_{d_m}(K)$ of the upper triangular invertible matrices acts on $S$ by restriction.  An ideal $J$ is  Borel-fixed if $g(J)=J$ for all $g\in B$.  
A multigraded ideal $I\subset S$ is Cartwright-Sturmfels if its multigraded Hilbert function coincides with that of a Borel-fixed radical ideal. The main properties of Cartwright-Sturmfels ideals are: 

\begin{compactenum}
\item[(1)]  If $I$ is Cartwright-Sturmfels then all its initial ideals are square-free. 
\item[(2)]  The set  of Cartwright-Sturmfels ideals is closed under multigraded linear sections and multigraded projections. 
\end{compactenum}

Examples of  Cartwright-Sturmfels ideals are: 

\begin{compactenum} 
\item[(3)] The ideal of $2$-minors and the ideal of maximal minors of matrices of distinct variables with graded structure given by rows or columns, \cite{CDG1, CDG2, CDG3}.
\item[(4)] Binomial edge ideals, \cite{CDG4}.
\item[(5)] Ideals of multigraded closure of linear spaces, \cite{CDG4}. 
\end{compactenum} 

So, multigraded linear sections and multigraded projections of the above ideals Cartwright-Sturmfels as well. 
Note that Conjecture \cite[1.14]{CDG4} turns out to be a special case of the multigraded version of  Theorem \ref{the1}, see Remark \ref{r:grad}. 

\subsection{Knutson ideals}\label{s:knu}
For this subsection, either $K=\mathbb{Q}$ or $K=\Z/p\Z$. Fix also $f\in S=K[x_1,\dots,x_n]$ such that $\init(f)$ is a square-free monomial for some term order $<$. 

Let $\C_f$ be the smallest  set of ideals of $S$ satisfying the following conditions:
\begin{compactenum}
\item $(f)\in\C_f$;
\item If $I\in\C_f$, then $I:J\in\C_f$ for any ideal $J\subseteq S$; in particular the associated primes of $I$ are in $\C_f$. 
\item If $I,J\in\C_f$, then $I+J\in\C_f$ and $I\cap J\in\C_f$.
\end{compactenum} 

\begin{example1}
If $f=x_1\cdots x_n$,  then $\C_f$ is the set of all the  square-free monomial ideals of $S$. 
\end{example1}

The class of ideals $\C_f$ is introduced and  studied by Knutson in \cite{Kn}. For this reason, we will call the elements of $\C_f$  Knutson ideals associated with $f$. In positive characteristic, if $I\in \C_f$ then $\phi_f(I)\subseteq I$ where $\phi_f:S\to S$ is a special  splitting (associated to the polynomial $f$) of the Frobenius morphism $F:S\to S$. In particular, if $I\in \C_f$, then $S/I$ is $F$-pure (in positive characteristic).
Knutson proved in \cite{Kn} that for every $I\in \C_f$ the initial ideal  $\init(I)$ is  square-free and hence every ideal in $\C_f$ is radical.  Moreover, one can infer that  $\init(I)\neq \init(J)$ whenever $I,J\in\C_f$ are different. 
So  $\C_f$ is always a finite set.

In \cite[Section 7]{Kn}, Knutson has shown that many interesting ideals belong to  $\C_f$ for a suitable choice of $f$, for example
ideals defining matrix Schubert varieties and ideals defining Kazhdan-Lusztig varieties. Below we present  an example of a Knutson prime ideal that is not Cohen-Macaulay. The interest lais in the fact that we expect that a prime ideal with a square-free initial ideal, under some additional assumption, should be Cohen-Macaulay (see Section \ref{s:q}). This example originated from a discussion we had with Jenna Rajchgot at MSRI in 2012; we thank her for pointing us that the example we discussed there is actually a Knutson ideal:

\begin{example1}\label{ex:notCM}
Let $S=K[x_1,\ldots ,x_5]$, and $f=gh$ where $g=x_1x_4x_5-x_2x_4^2-x_3x_5^2$ and $h=x_2x_3-x_4x_5$. By considering lex, $\init(f)=x_1x_2x_3x_4x_5$. We have that $(g, h)$ is a height 2 complete intersection in $\C_f$. One can check that 
\[\pp=(g, h, \ \ x_1x_3x_4-x_3^2x_5-x_4^3, \ \ x_1x_2x_5-x_2^2x_4-x_5^3)\]
is a height 2 prime ideal. Since it contains $(g,h)$, must be associated to it, so $\pp\in\C_f$ and one can check  that $S/\pp$ is not Cohen-Macaulay.
\end{example1}

Next, we show how to derive the solution of a conjecture stated in \cite{BeV} in the case of Knutson ideals. 
Given an ideal $I\subseteq S$, the graph with vertex set $\Min(I)$ and edges $\{\pp,\pp'\}$ if $\height(\pp+\pp')=\height(I)+1$ is called the dual graph of $I$. The ideal $I$ is called Hirsch if the diameter of the dual graph is bounded above from $\height(I)$. In \cite{BeV,DV} has been explained why Hirsch ideals are a natural class to consider, and there have also been provided several examples of such ideals. In particular, in \cite[Conjecture 1.6]{BeV} has been conjectured that, if $I$ is quadratic and $S/I$ is Cohen-Macaulay, then $I$ is Hirsch.
\begin{proposition}
Let $I\subseteq S$ be a homogeneous Knutson ideal such that $S/I$ satisfies $(S_2)$ (e.g. $S/I$ is Cohen-Macaulay). If either $\init(I)$ is quadratic or $\height(I)\leq 3$, then $I$ is Hirsch.
\end{proposition}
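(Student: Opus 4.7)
The plan is to reduce to a Hirsch bound for the initial ideal $J=\init(I)$, which is a square-free monomial ideal by Knutson's theorem. Corollary~\ref{c:2}(ii) gives that $S/J$ still satisfies $(S_2)$, and $\height(J)=\height(I)$. Since $\C_f$ is closed under passage to associated primes, every $\pp\in\Min(I)$ lies in $\C_f$ as well, so its initial ideal $\init(\pp)$ is a square-free monomial prime; writing $\init(\pp)=P_{F(\pp)}$ identifies it with a facet $F(\pp)$ of the simplicial complex $\Delta$ with $J=I_\Delta$, giving a bijection between $\Min(I)$ and the facets of $\Delta$.

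The central step is to prove that this bijection lifts every edge of the dual graph $G(J)$ to an edge of $G(I)$, so that $\diam G(I)\le\diam G(J)$. Fix an edge $\{F,F'\}$ of $G(J)$, so $|F\cap F'|=\dim\Delta$ and $\height(P_F+P_{F'})=\height(I)+1$, and let $\pp,\pp'$ be the corresponding primes of $I$. The inclusion $\init(\pp+\pp')\supseteq \init(\pp)+\init(\pp')=P_F+P_{F'}$ together with the preservation of Krull dimension under Gr\"obner degeneration gives $\height(\pp+\pp')\ge \height(I)+1$. For the reverse inequality, one exploits that $\pp+\pp'\in\C_f$, so that $\init(\pp+\pp')$ is itself square-free, and applies the flatness of the homogenization family of Theorem~\ref{the1} specialized to $\homo_w(\pp+\pp')$: the special fiber deforms flatly to an ideal whose height matches $\height(P_F+P_{F'})$, forcing $\height(\pp+\pp')=\height(I)+1$. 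Hence $\{\pp,\pp'\}$ is indeed an edge of $G(I)$.

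Finally, the proof concludes by invoking the corresponding monomial Hirsch results of \cite{BeV} for $J$: when $\init(I)$ is quadratic, $\Delta$ is a flag $(S_2)$-complex and is therefore Hirsch; when $\height(I)\le 3$, pure $(S_2)$ simplicial complexes of codimension at most $3$ are Hirsch. Combined with the dual graph comparison this yields $\diam G(I)\le \diam G(J)\le \height(J)=\height(I)$.

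The main obstacle is the reverse inequality $\height(\pp+\pp')\le \height(I)+1$ in the edge-lifting step: since $\init$ does not commute with sums in general, $\init(\pp+\pp')$ could in principle be strictly larger than $P_F+P_{F'}$, producing a height jump. Ruling this out relies essentially on the Knutson structure (which ensures $\pp+\pp'\in\C_f$, so its initial ideal is square-free) together with the flat deformation machinery developed in Section~\ref{s:main}, and is the delicate core of the argument.
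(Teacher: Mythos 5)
Your overall architecture is the same as the paper's: pass to $J=\init(I)$, which is square-free since $I$ is a Knutson ideal; transfer $(S_2)$ to $S/J$ via Corollary~\ref{c:2}; settle the monomial cases by \cite{AB} (flag normal complexes) and \cite[Corollary A.4]{Ho} (codimension at most $3$); and finish by bounding $\diam$ of the dual graph of $I$ by that of $J$. The difference is that the paper obtains this last comparison simply by citing \cite[Theorem 3.3]{DV}, whereas you attempt to prove it from scratch, and your argument has a genuine error at its foundation. The claimed bijection between $\Min(I)$ and the facets of $\Delta$, resting on the assertion that $\init(\pp)$ is a square-free monomial \emph{prime} $P_{F(\pp)}$, is false: membership in $\C_f$ only guarantees that $\init(\pp)$ is square-free, and the initial ideal of a prime ideal is almost never prime --- by Kalkbrener--Sturmfels \cite{KaSt} it is pure of the correct height but typically has many minimal primes (the $2$-minors of a generic matrix give a Knutson prime whose initial complex has many facets). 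The correct combinatorial object is a surjection from the facets of $\Delta$ onto $\Min(I)$, sending $F$ to the (unique, but this needs an argument) $\pp$ with $\init(\pp)\subseteq P_F$, under which edges of the dual graph of $J$ may collapse to loops; only with such a map does the path-lifting argument for $\diam G(I)\leq \diam G(J)$ go through, and connectedness of $G(J)$ (from normality of $\Delta$) is also needed.

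Once the bijection is replaced by this surjection, your displayed identity $\init(\pp)+\init(\pp')=P_F+P_{F'}$ degrades to the inclusion $\init(\pp)+\init(\pp')\subseteq P_F+P_{F'}$. Note that this inclusion points the \emph{right} way for the step you call delicate: the bound $\height(\pp+\pp')\geq\height(I)+1$ is automatic for distinct minimal primes of a pure (hence, by $(S_2)$, equidimensional) radical ideal and needs no degeneration argument at all, while the nontrivial bound $\height(\pp+\pp')\leq\height(I)+1$ follows from Knutson's compatibility of initial ideals with sums inside $\C_f$ (so $\init(\pp+\pp')=\init(\pp)+\init(\pp')\subseteq P_F+P_{F'}$) together with preservation of height under Gr\"obner degeneration. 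As written, your paragraph on the ``reverse inequality'' only gestures at the flat homogenization family without extracting a usable statement, so the step you yourself identify as the core of the argument is not actually carried out. In short: the reduction and the monomial endgame match the paper; the dual-graph comparison should either be quoted from \cite[Theorem 3.3]{DV}, as the paper does, or reproved along the corrected lines above.
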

\begin{proof}
By the assumption $\init(I)$ is the Stanley-Reisner ideal $I_{\Delta}$ of a simplicial complex $\Delta$ on $n$ vertices. Since $S/I$ satisfies $(S_2)$, $S/I_{\Delta}$ satisfies $(S_2)$ as well by Corollary \ref{c:2}. In other words, $\Delta$ must be a normal simplicial complex. If $I_{\Delta}$ is quadratic (that is $\Delta$ is flag), then $I_{\Delta}$ is Hirsch by \cite{AB}. If $\height(I_{\Delta})\leq 3$, then it is simple to check that $I_{\Delta}$ is Hirsch (see  \cite[Corollary A.4]{Ho} for the less trivial case in which $\height(I_{\Delta})=3$). In each case, we conclude because, under the assumptions of the theorem, by \cite[Theorem 3.3]{DV} the diameter of the dual graph of $I$ is bounded above from that of $\init(I)$.
\end{proof}

\section{Questions and answers }\label{s:q}
The first version of this paper, posted on  arXiv on May 30 2018,  contains in Section 4 five  questions. 
We received several comments concerning these questions and it turns out that  only Question \ref{q:rat}  is still open.   We reproduce below the questions with the original numbering and the relative answers. 

\begin{question}
Let $I\subseteq S$ be a prime ideal with a square-free initial ideal. Does  $S/I$ satisfy  Serre's condition $(S_2)$?
\end{question}
Notice that, if $\pp\subseteq S$ is a prime ideal with a square-free initial ideal, then $\depth S/\pp\geq 2$ (provided $\dim S/\pp\geq 2$) by \cite{KaSt,Va1}. 
However the answer is negative:  the ideal in  \ref{ex:notCM} does not  satisfy  Serre's condition $(S_2)$.

\begin{question}\label{q:rat}
Let $I\subseteq S$ be a homogeneous prime ideal with a square-free initial ideal   such that $\Proj S/I$ is nonsingular. Is  $S/I$ Cohen-Macaulay and with negative $a$-invariant?
\end{question}

A similar question appeared in \cite[Problem 3.6]{Va2}. For ASL's, a question similar to Question \ref{q:rat} already caught some attention in the eighties:  Buchweitz proved a related statement (unpublished, see \cite{DEP}), that using Theorem \ref{the1} can be expressed as follows:

\begin{theorem}[Buchweitz]
Let $A$ be an ASL domain over a field of characteristic $0$ such that $\Proj A$ has rational singularities. If $A$ is Cohen-Macaulay, then $A$ has rational singularities.
\end{theorem}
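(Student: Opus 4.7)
The plan is to combine Theorem \ref{the1} with the standard graded criterion for rational singularities: for a Cohen--Macaulay positively graded normal domain $B$ over a field of characteristic zero such that $\Proj B$ has rational singularities, $B$ has rational singularities if and only if $a(B)<0$ (due to Flenner and Watanabe). Since $A$ is Cohen--Macaulay by hypothesis and $\Proj A$ is normal (rational singularities are normal), Serre's $R_1+S_2$ criterion forces $A$ itself to be normal, so the criterion applies and the whole task reduces to proving $a(A)<0$.

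By Theorem \ref{the1}, the local cohomology modules $H^{i}_{\mm}(A)$ and $H^{i}_{\mm}(A_D)$ have the same Hilbert function; in particular $a(A)=a(A_D)$. The discrete counterpart $A_D$ is the Stanley--Reisner ring $K[\Delta(H)]$ of the order complex of $H$, and by Corollary \ref{ASLCM} it is Cohen--Macaulay. For a Cohen--Macaulay Stanley--Reisner ring the $a$-invariant is always non-positive, and by Hochster's formula it equals zero if and only if the top reduced cohomology $\tilde{H}^{d-1}(\Delta(H);K)$ is non-zero, where $d=\dim A$. The task therefore reduces to showing $\tilde{H}^{d-1}(\Delta(H);K)=0$.

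This last step is where the domain hypothesis enters, and it is the heart of the proof. I claim that $H$ must have a unique minimal element. Indeed, if $h_1\neq h_2$ were distinct minimal elements of $H$, they would be incomparable and no element of $H$ would lie strictly below either of them; the straightening law for $h_1h_2$ would therefore be the empty sum, so $h_1h_2=0$ in $A$, while $h_1,h_2$ are distinct standard monomials and hence nonzero in $A$---contradicting that $A$ is a domain. Let $\hat{0}$ denote this unique minimum. Every maximal chain of $H$ contains $\hat{0}$, so every facet of $\Delta(H)$ contains the vertex $\hat{0}$; in other words $\Delta(H)$ is the simplicial cone with apex $\hat{0}$ over the order complex of $H\setminus\{\hat{0}\}$, and is therefore contractible. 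This gives $\tilde{H}^{d-1}(\Delta(H);K)=0$, so $a(A_D)<0$, whence $a(A)<0$, and the graded criterion yields that $A$ has rational singularities. The main conceptual obstacle is this combinatorial observation; once one sees that domain-ness forces a unique minimum and hence a contractible order complex, the rest falls out of Theorem \ref{the1} and a standard graded criterion for rational singularities.
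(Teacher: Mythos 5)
The paper does not actually prove this statement: it is quoted as an unpublished result of Buchweitz (via \cite{DEP}), and the only contribution of the paper is the observation that Theorem \ref{the1} (through Corollary \ref{ASLCM}, which gives $\depth A=\depth A_D$) allows the hypothesis to be phrased as ``$A$ is Cohen--Macaulay'' rather than ``$A_D$ is Cohen--Macaulay.'' So there is no in-paper proof to compare against, and your argument has to stand on its own. It does: the reduction via the Flenner--Watanabe criterion to $a(A)<0$, the identification $a(A)=a(A_D)$ from Theorem \ref{the1} (in the weighted-graded form of Remark \ref{r:grad}, since the ASL presentation need not be standard graded), the identification of $A_D$ with the Stanley--Reisner ring of the order complex of $H$, and the key combinatorial step --- that an ASL domain forces a unique minimal element of $H$ because two incomparable minimal elements would have an empty straightening law and hence product zero, so the order complex is a cone and $\tilde{H}^{d-1}(\Delta(H);K)=0$, whence $a(A_D)<0$ by Hochster's formula --- are all correct. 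Two small remarks. First, you do not need the full strength of Theorem \ref{the1} for the $a$-invariant step: the classical semicontinuity $h^{ij}(S/I)\leq h^{ij}(S/\init(I))$ already gives $a(A)\leq a(A_D)$, which suffices; where Theorem \ref{the1} is genuinely indispensable is in replacing the hypothesis ``$A_D$ Cohen--Macaulay'' by ``$A$ Cohen--Macaulay,'' which is precisely the reformulation the paper is advertising. Second, the deduction that $A$ is normal from $S_2$ plus normality of $\Proj A$ is immediate for the standard grading (the punctured spectrum is a $\mathbb{G}_m$-torsor over $\Proj A$) but deserves a sentence in the general weighted case, and the degenerate cases $\dim A\leq 1$ should be dismissed separately; neither point is a real gap. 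Note that the paper's own Question \ref{q:norm} (answered negatively by the Hibi--Watanabe example) shows that a Cohen--Macaulay ASL domain need not be normal, so the hypothesis on $\Proj A$ is doing genuine work in your normality step --- it is worth making that dependence explicit.
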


A result of Brion \cite{Br} on multiplicity free irreducible varieties implies  $S/I$ is Cohen-Macaulay  for every  prime Cartwright-Sturmfels ideal $I$.   Similarly one can ask: 

\begin{question}
\label{KnPrCM}
Let $I\subseteq S$ be a Knutson prime ideal. 
Is  $S/I$  Cohen-Macaulay?   
\end{question}

Jenna Rajchgot  informed us that the ideal Example \ref{ex:notCM}  is indeed a  Knutson prime ideal $\pp\subseteq S$ such  that $S/\pp$ is not Cohen-Macaulay.  This provides a negative answer to Question \ref{KnPrCM}.

\begin{question}
\label{qFpure} 
Let $I\subseteq S$ be an ideal such that $\init(I)$ is a square-free monomial ideal for degrevlex. If $\mathrm{char}(K)>0$, is it true that $S/I$ is $F$-pure?
\end{question}

In Example \ref{ex:f-pure} $S/I$ is not $F$-pure and $I$ is an ideal with a square-free initial ideal. However, in that case $\init(I)$ is a square-free monomial ideal only for lex, so it does not provide a negative answer to the above question. Also, the question whether ASL's over fields of positive characteristic are $F$-pure was already raised in \cite{DEP}, and as explained in \ref{s:ASL}  ASL's have square-free quadratic degenerations  with respect to degrevlex term orders. 

Combining  results on Cartwright-Sturmfels ideals recalled  in \ref{s:cs} with a result of Othani   \cite{Oh13}, we obtain a negative  answer to  Question \ref{qFpure}.  Othani   \cite{Oh13}  proves that, if $I$ is the binomial edge ideal of the $5$-cycle, the ring  $S/I$ is not $F$-pure in characteristic $2$.  As  recalled  in \ref{s:cs} binomial edge ideals, being Cartwright-Sturmfels, have a  square-free monomial ideal for every term order.  
As far as we know it might be true that  for a Cartwright-Sturmfels ideal $I$  the rign  $S/I$ is of $F$-pure type in characteristic $0$.

\begin{question}\label{q:norm}
Let $I\subseteq S$ be a prime ideal such that $\init(I)$ is a square-free monomial ideal for degrevlex. Is it true that $S/I$ is normal?
\end{question}

Without assuming that the term order is of degrevlex type, it is easy to see that the answer to the above question is negative (consider for example  $(x_1x_2x_3+x_2^3+x_3^3)\subseteq K[x_1,x_2,x_3]$). On the other hand,  Eisenbud in \cite{Ei} conjectured a positive answer to Question \ref{q:norm} for ASL's.
Hibi informed  us that a $3$-dimensional standard graded  ASL which is a non-normal (Gorenstein) domain is described in \cite{HW85}.  This provides a negative answer to Question \ref{q:norm}.


\begin{thebibliography}{BMS2}

   \bibitem[CoCoA]{cocoa}    J. Abbott, A.M. Bigatti, L. Robbiano, \emph{CoCoA: a system for doing Computations in Commutative Algebra}, available at \url{http://cocoa.dima.unige.it}.

\bibitem[AB14]{AB}
K.~Adiprasito, B.~Benedetti, {\it The Hirsch conjecture holds for normal flag complexes}, Math. Oper. Res. 39, 1340--1348, 2014.

\bibitem[Ba81]{Ba} K.~Baclawski {\it Rings with lexicographic straightening law}, Adv. Math.  39, 185--213 1981.
 

\bibitem[BCP99]{BCP} D.~Bayer, H.~Charalambous, S.~Popescu, {\it Extremal Betti numbers and applications to monomial ideals}, J. Algebra 221, 497--512, 1999.

\bibitem[BS87]{BS}
D.~Bayer, M.~Stillman, {\it A criterion for detecting $m$-regularity}, Invent. Math. 87, 1--12, 1987.

\bibitem[BV15]{BeV}
B.~Benedetti, M.~Varbaro, {\it On the dual graph of Cohen-Macaulay algebras}, Int. Math. Res. Not., no. 17, 8085-8115, 2015.

\bibitem[Br03]{Br}
M.~Brion, \textit{Multiplicity-free subvarieties of flag varieties}, Contemp. Math. 331, 13--23, 2003.

\bibitem[BH93]{BH} W.~Bruns, J.~Herzog, \textit{Cohen-Macaulay rings}, Cambridge studies in advanced mathematics, 1993.

\bibitem[BS90]{BrSc}W.~Bruns, R.~Schw\"anzl, \textit{The number of equations defining a determinantal variety}, Bull. London Math. Soc. 22, 439--445, 1990.

\bibitem[BV88]{BV}
W.~Bruns and U.~Vetter, \emph{Determinantal rings}, Lecture Notes Mathematics 1327, Springer-Verlag, Berlin, 1988. 

\bibitem[CCV14]{CCV}
G.~Caviglia, A.~Constantinescu, M.~Varbaro, {\it On a conjecture by Kalai}, Israel J. Math. 204, 469--475, 2014.

\bibitem[CS10]{CS}
   D.~Cartwright, B.~Sturmfels,
   \emph{The Hilbert scheme of the diagonal in a product of projective spaces},
   Int. Math. Res. Not., no. 9, 1741--1771, 2010.
   
    \bibitem[Ch07]{Ch} M. Chardin, {\it Some results and questions on Castelnuovo-Mumford regularity, Syzygies and Hilbert functions}, Lect. Not. in Pure Appl. Math. 254, 1--40, 2007.
      
\bibitem[Co07]{Co} A.~Conca, 
{\em Linear spaces, transversal polymatroids and ASL domains,}
J. Alg. Combin. 25, 25--41, 2007. 

\bibitem[CDG15]{CDG1} A.~Conca, E.~De Negri, E.~Gorla, 
  \emph{Universal Gr\"obner bases for maximal minors},  
  Int. Math. Res. Not. IMRN, no. 11, 3245--3262, 2015. 
  
  \bibitem[CDG17]{CDG2} A.~Conca, E.~De Negri, E.~Gorla,	
  \emph{Multigraded generic initial ideals of determinantal ideals},
  in: Homological and Computational Methods in Commutative Algebra, 
  A. Conca, J. Gubeladze, Joseph, T. R\"omer (Eds.),
  81--96,  INdAM Series 20, Springer-Verlag, Heidelberg, 2017.

\bibitem[CDG18]{CDG3} A.~Conca, E.~De Negri, E.~Gorla,
  \emph{Universal Gr\"obner bases and Cartwright-Sturmfels ideals},
  \url{arXiv:1608.08942}, to appear in Int. Math. Res. Not. IMRN.

\bibitem[CDG18b]{CDG4} A.~Conca, E.~De Negri, E.~Gorla,
  \emph{Cartwright-Sturmfels ideals associated to graphs and linear spaces},
  \url{arXiv:1705.00575}, to appear in J. Comb. Alg. (EMS).


\bibitem[DDM18]{DDM}
H.~Dao, A.~De Stefani, L.~Ma, {\it Cohomologically full rings}, \url{arXiv:1806.00536}, 2018. 

\bibitem[DEP82]{DEP} 
C.~De Concini, D.~Eisenbud, C.~Procesi, {\it Hodge algebras}, Ast\'erisque 91, 1982.

\bibitem[DV17]{DV}
M.~Di Marca, M.~Varbaro, {\it On the diameter of an ideal}, \url{arXiv:1705.03210}, 2017.

\bibitem[Ei80]{Ei}
D.~Eisenbud, \textit{Introduction to algebras with straightening laws}, Lect. Not. in Pure and Appl. Math. 55, 243--268, 1980.

\bibitem[Ei94]{Ei1}D.~Eisenbud, \textit{Commutative Algebra with a
View Toward Algebraic Geometry}, Graduate Texts in Mathematics,
Springer, 1994.

\bibitem[EGH93]{EGH} D.~Eisenbud, M.~Green, J.~Harris, \textit{Higher Castelnuovo Theory}, Ast\'erisque 218, 187-202, 1993.

\bibitem[HS02]{HS} J.~Herzog, E.~Sbarra,  {\it Sequentially Cohen-Macaulay modules and local cohomology} 
Algebra, arithmetic and geometry, Part I, II (Mumbai, 2000) Tata Inst. Fund. Res., 327--340, 2002.
 	 


\bibitem[HR18]{HR18}
J.~Herzog, G.~Rinaldo, {\it On the extremal Betti numbers of binomial edge ideals of block graphs}, \url{arXiv:1802.06020}, 2018.

\bibitem[HW85]{HW85}
T. Hibi, K. Watanabe, {\it Study of three-dimensional algebras with straightening laws which are Gorenstein domains II}, Hiroshima Math. J. 15, 321-340, 1985.

\bibitem[Ho16]{Ho}
B.~Holmes, {\it On the diameter of dual graphs of Stanley-Reisner rings with Serre $(S_2)$ property and Hirsch type bounds on abstractions of polytopes}, \url{arXiv:1611.07354}, 2016.



\bibitem[Hi86]{Hi} T.~Hibi, {\it Every affine graded ring has a Hodge algebra structure}, Rend. Mat. Uni. Pol. Torino 44, 278--286, 1986.

\bibitem[HR76]{HR}M.~Hochster, J.~L.~Roberts \textsl{The purity of the Frobenius and Local Cohomology}, Adv. Math. 21, 117--172, 1976.

\bibitem[KS95]{KaSt} M.~Kalkbrener, B.~Sturmfels, \textsl{Initial Complex of Prime Ideals}, Adv. Math. 116, 365--376, 1995.

\bibitem[Kn09]{Kn}
A.~Knutson, {\it Frobenius splitting, point-counting, and degeneration}, \url{arXiv:0911.4941}, 2009.

\bibitem[KK18]{KK}
J.~Koll\'ar, S.~J.~Kov\'acs, {\it Deformations of log canonical singularities}, \url{arXiv:1803.03325}, 2018.

\bibitem[KK18b]{KKb}
J.~Koll\'ar, S.~J.~Kov\'acs, {\it Deformations of log canonical and F-pure singularities}, \url{arXiv:1807.07417}, 2018.


\bibitem[Ly83]{Lyu} G.~Lyubeznik, \textsl{On the Local Cohomology Modules $H_a^i(R)$ for Ideals $a$ generated by Monomials in an $R$-sequence}, Lecture Notes Mathematics 1092, 1983.

\bibitem[MSS17]{MSS} L.~Ma, K.~Schwede, K. Shimomoto, {\it Local cohomology of Du Bois singularities and applications to families}, Comp. Math. 153, 2147--2170, 2017.

\bibitem[Macaulay2]{m2}
D.~Grayson, M.~Stillman, \emph{Macaulay2, a software system for research in algebraic geometry}, available at \url{http://www.math.uiuc.edu/Macaulay2/}.

\bibitem[MS05]{MS}
E.~Miller, B.~Sturmfels, {\em Combinatorial Commutative Algebra}, Graduate Texts in Mathematics, Springer 2005.

\bibitem[Mi10]{Mi} M.~Miyazaki, \textit{On the discrete counterparts of algebras with straightening laws}, J. Comm. Alg. 2, 79--89, 2010.

\bibitem[Oh13]{Oh13}
M. Ohtani, \textit{Binomial Edge Ideals of Complete Multipartite Graphs}, Comm. Alg. 41, 3858-3867, 2013.

\bibitem[PS73]{PS}C.~Peskine, L.~Szpiro, \textit{Dimension projective finie et cohomologie locale}, IHES Publ. Math. 42, 47--119, 1973.

\bibitem[Sc79]{Sc79}
P.~Schenzel, {\em Zur lokalen Kohomologie des kanonischen Moduls}, Math. Z. 165, 223--230, 1979.

\bibitem[Sc82]{Sc} P. Schenzel, \textit{Applications of Dualiziang Complexes to Buchsbaum Rings}, Adv. Math. 44, 61--77, 1982. 

\bibitem[Sc09]{schwede} K.~Schwede, \textit{$F$-injective singularities are Du Bois}, Amer. J. Math. 131, 445--473, 2009.

\bibitem[Si99]{Si} A.K.~Singh, \textit{$F$-regularity does not deform}, Amer. J. Math. 121, 919--929, 1999.

\bibitem[St90]{St} B.~Sturmfels, \textit{Gr\"obner bases and Stanley decompositions of determinantal rings}, Math. Z. 205, 137--144, 1990.

\bibitem[St95]{sturmfels} B.~Sturmfels, \textit{Gr\"{o}bner Bases and Convex Polytopes}, University Lecture Series 8, AMS, 1995.

\bibitem[StacksProj]{SP} Stacks Project Authors, \textit{Stacks project}, available at \url{http://stacks.math.columbia.edu}.

\bibitem[Va09]{Va1}
M.~Varbaro, {\it Gr\"obner deformations, connectedness and cohomological dimension}, J. Alg. 322, 2492--2507, 2009.

\bibitem[Va18]{Va2}
M.~Varbaro, {\it Connectivity of hyperplane sections of domains}, \url{arXiv:1802.09445}, 2018.

\bibitem[Ya11]{Ya}
K.~Yanagawa, {\em Dualizing complex of the face ring of a simplicial poset}, J. Pure Appl. Alg. 215, 2231--2241, 2011.

\end{thebibliography}
\end{document}